\newtheoremstyle{mythm}
{5pt}
{5pt}
{\itshape}
{}
{\bfseries}
{.}
{.5em}
{}
\newtheoremstyle{mydef}
{5pt}
{5pt}
{}
{}
{\bfseries}
{.}
{.5em}
{}
\theoremstyle{mythm}%
\newtheorem{theorem}{Theorem}
\newtheorem{lemma}{Lemma}
\newtheorem{proposition}{Proposition}%
\theoremstyle{mydef}%
\newtheorem{example}{Example}%
\newtheorem{remark}{Remark}%
\newtheorem{definition}{Definition}%
\newtheorem{assumption}{Assumption}
\renewenvironment{proof}{\smallskip\noindent\emph{\textbf{Proof.}}%
  \hspace{1pt}}{\hspace{-5pt}{\nobreak\quad\nobreak\hfill\nobreak%
    $\square$\vspace{2pt}\par}\smallskip\goodbreak}
\newcommand{\Lip}{\mathrm{Lip}}
\renewcommand{\epsilon}{\varepsilon}
\renewcommand{\phi}{\varphi}
\newcommand{\spt}{\mathop{\rm spt}}
\renewcommand{\d}{\,\mathrm{d}}
\newcommand{\id}{\mathrm{id}}
\def\sign{\mathop{\rm sign}\nolimits}
\def\Sign{\mathop{\rm Sign}\nolimits}
\newcommand{\R}{\mathbb{R}}
\newcommand{\T}{\intercal}
\begin{document}

\title[Optimal Control of Nonlocal Balance Equations]{Optimal Control of Nonlocal Balance Equations}


\author*{\fnm{Nikolay} \sur{Pogodaev}}\email{nickpogo@gmail.com}

\author*{\fnm{Maxim} \sur{Staritsyn}}\email{starmaxmath@gmail.com}

%
%


\abstract{
The paper presents an approach to studying optimal control problems in the space of nonnegative measures with dynamics given by a nonlocal balance law. This approach relies on transforming the balance law into a continuity equation in the space of probabilities, and subsequently into an ODE in a Hilbert space. The main result is a version of Pontryagin's maximum principle for the addressed problem. 
}

\keywords{nonlocal balance equation, mean-field control, optimal control, Pontryagin's Maximum Principle}



\maketitle

\emph{We dedicate this article to the 85th anniversary of Professor Alexander Tolstonogov, whose scientific influence on our professional paths cannot be overstated.}

\section{Introduction\label{sec:level1}}

The paper addresses an optimal control problem for a \emph{nonlocal} balance law~--- the transport equation with a \emph{source} term~--- on the space 
\(
\mathcal{X} \doteq \mathcal{M}^+_c(\mathbb{R}^n)
\)
of nonnegative Borel measures with compact supports in the Euclidean space $\mathbb{R}^n$. Specifically, given the data
\begin{gather*}
I \doteq [0, T], \quad T > 0, \quad U \subset \mathbb{R}^m, \quad \vartheta \in \mathcal{X}, 
\\
\ell \colon \mathcal{X} \to \mathbb{R}, \quad F \colon I \times (U \times \mathcal X \times \mathbb{R}^n ) \to \mathbb{R}^n, \quad \text{and} \quad 
G \colon I \times (U \times \mathcal{X} \times \mathbb{R}^n) \to \mathbb{R},
\end{gather*}
we consider the minimization problem:
\[
(P) \qquad \ell(\mu_T^u) \to \inf, \quad u \in \mathcal{U},
\]
subject to the partial differential equation (PDE):
\begin{align}
\partial_t \mu_t + \nabla_{x} \cdot \big[F_t\big(u_t, \mu_t\big) \, \mu_t\big] = G_t\big(u_t, \mu_t\big) \, \mu_t, \quad t \in I; \quad \mu_0 = \vartheta. \label{eq:u-blaw}
\end{align}

In this formulation:
\begin{itemize}
    \item The set $\mathcal{U} \doteq L^\infty(I; U)$ represents the class of admissible controls $u \colon I \to U$ taking values in a \emph{compact} subset $U$ of $\R^m$. For simplicity, we restrict our attention to controls depending only on time, discussing potential generalizations in concluding sections. This choice places problem $(P)$ within the framework of \emph{ensemble control} \cite{liEnsembleControlFiniteDimensional2011}.
    
    \item The set $\mathcal{X}$ is called the state space. For a given $u \in \mathcal{U}$, a solution $\mu = \mu[u] \colon I \to \mathcal{X}$, $t \mapsto \mu_t$, to \eqref{eq:u-blaw} (in the distributional sense specified below) is referred to as a trajectory corresponding to the control $u$. 

    \item Without loss of generality, we assume that $\vartheta$ is a \emph{probability measure}.  
    All forthcoming results are easily extended to the general case $\vartheta\in \mathcal{X}$ by an appropriate normalization.

    \item The map $F$ is termed a \emph{nonlocal} vector field, reflecting its potential dependence on the distribution $\mu_t$ over the entire base space $\mathbb{R}^n$.
    
    \item The term on the right-hand side of \eqref{eq:u-blaw} is called the \emph{source} (or sink) term. In general \cite{pogodaevNonlocalBalanceEquations2022}, this term could be represented by a controlled operator $\mathcal{X} \to \mathcal{X}$. In our present setting, similar to \cite{Averboukh2024a,Averboukh2024b}, the source term is given by a measure with density relative to $\mu_t$. While the density function $G$ may depend nonlocally on $\mu_t$, the source is always supported on the support $\spt(\mu_t)$ of $\mu_t$. In fact, we have $\spt(\mu_t) = \spt(\widetilde{\mu}_t)$, where $t \mapsto \widetilde{\mu}_t$ solves the continuity equation:
\[
    \partial_t \widetilde{\mu}_t + \nabla_{{x}} \cdot \big[F_t(u_t, \widetilde{\mu}_t) \, \widetilde{\mu}_t\big] = 0, \quad \widetilde{\mu}_0 = \vartheta.
\]
This feature will be essential for our consequent analysis.

\end{itemize}

\subsection{Motivation}

The main motivation for considering dynamical systems of the form \eqref{eq:u-blaw} stems from various applications, where 
the measure $\mu_t$ represents the time-varying distribution of a population of certain ``microscopic'' individuals carrying different ``masses''. When the population is \emph{homogeneous} (i.e., the individuals are homotypic or indistinguishable, having the same mass), and its size remains constant (i.e., $G \equiv 0$), the system is naturally analyzed in a metric space of probability measures. In this conservative case, which has been extensively studied in the literature, $\mu_t$ explicitly represents the probability distribution law of the population, referred to as the \emph{mean field}. This framework, rooted in Statistical Mechanics \cite{cuckerEmergentBehaviorFlocks2007}, provides a robust mathematical foundation for modeling and analyzing the statistical behavior of interacting entities, such as elementary particles, animals, public traffic or opinion agents, see, e.g.,  \cite{carrillo2010particle,cristiani2014multiscale,colombo2011nonlocal,piccoli2011time} and citations therein.

The addressed non-conservative case enables modeling \emph{heterogeneous} populations of agents that differ in their ``influence'' on each other.  Although the range of applications for such models is evidently much broader, and they keep attracting vivid interest  in recent years \cite{Averboukh2024a,Averboukh2024b,CARRILLO20123245,colomboNonLocalSystemsBalance2015,Dll2023}, the related control-theoretical results remain surprisingly limited. In fact, the corresponding \emph{optimal control} theory is not yet developed even at the foundational level. Here, we can only mention our recent works \cite{goncharovaisu,pogodaevNonlocalBalanceEquations2022}. In the first, preliminary results on the well-posedness of a general-form optimal control problem are obtained. In the second, a variant of the classical Pontryagin's maximum principle is derived for the state-linear version of the problem $(P)$.

\subsection{Example: Opinion Dynamics}
\label{ssec:model}

To exemplify the appearance of equation \eqref{eq:u-blaw}, consider the multi-agent model from~\cite{Duteil2022} describing the opinion dynamics in a social group:
\begin{equation}
\label{eq:model}
\begin{cases}
    \dot x_i(t) = \displaystyle\frac{1}{M(t)}\sum_{j=1}^N m_j(t) \, \psi(x_j(t)-x_i(t)),\\
    \dot m_i(t) = \displaystyle\frac{m_i(t)}{M(t)^q}\sum_{j_1=1}^N\cdots \sum_{j_q=1}^N m_{j_1}(t)\cdots m_{j_q}(t) \, S(x_i,x_{j_1},\ldots, x_{j_q}),
\end{cases} \; 1\le i \le N.   
\end{equation}
Each agent in this model is characterized by two parameters: its opinion $x\in \R^n$ and its influence $m\in \R^+$. The ``total influence,'' denoted by $M$, is given by $M(t) = \sum_{i=1}^N m_i(t)$. The functions $\psi \colon \R^n\to \R^n$ and $S\colon (\R^n)^{q+1}\to \R$ describe, respectively, the change in opinion and the change in influence due to interactions among agents. Some meaningful examples of $\psi$ and $S$ can be found in~\cite{Piccoli2019}.

It is shown in~\cite{Duteil2022} that, under mild regularity assumptions on $\psi$ and $S$ and the skew-symmetry condition
\begin{gather}\label{eq:skew}
\exists (i,j)\in \{0,\ldots,q\}\text{ such that } \forall x\in (\R^n)^{q+1} 
\notag\\ 
S(x_0,\ldots,x_i,\ldots,x_j,\ldots,x_q) = -S(x_0,\ldots,x_j,\ldots,x_i,\ldots,x_q),
\end{gather}
the mean-field limit of~\eqref{eq:model} as $N\to+\infty$ is described by the balance equation~\eqref{eq:u-blaw}, where $F$ and $G$ are defined as
\[
F(\mu,x) = \int_{\R^n} \psi(y-x)\d\mu(y),\quad G(\mu,x) = \int_{(\R^n)^q} S(x,x_1,\ldots,x_q)\d\mu(x_1)\cdots\d\mu(x_q).
\]

\subsection{Goals. Main Ideas. Contribution and Novelty}

The main goal of this note is to propose a general approach to the analytical and numerical investigation of the balance law \eqref{eq:u-blaw} and the associated optimization problem $(P)$.

Our approach, which to the best of our knowledge is novel, involves reducing $(P)$ to an equivalent and well-studied ``conservative'' form, represented by the nonlocal continuity equation (without a source term) over an extended base space. This reduction is made possible by the specific structure of the source measure and enables the unique reconstruction of a distributional solution of the original balance law via a barycentric projection. The continuity equation, in turn, can be further expressed as an evolution equation in a Hilbert space, providing access to a broad spectrum of results in the spirit of \cite{tolstonogovDifferentialInclusionsBanach2011}. 

Although this approach is fairly simple and, in essence, reduces the model at hand to an already known case, it nevertheless facilitates the translation of analytical and control-theoretical results, available for the reduced model, to the original one.

As the main result, we demonstrate the application of our methodology to derive the foundational Pontryagin's maximum principle (PMP) \cite{Pontryagin1962} for the problem $(P)$. Notably, this is the first result of its kind for optimal control problems involving \emph{nonlocal balance laws}.

\subsection{Notations and Conventions}

By $\R^n$ we denote the $n$-dimensional Euclidean space equipped with the Euclidean norm $|\cdot|$. 

Let $X$ be a topological space. Its topological dual is denoted by $X^*$, typically endowed with the weak* topology $\sigma(X^*, X)$. 

In finite-dimensional spaces, we distinguish vectors in $\R^n$ from those in its dual space $(\R^n)^*$. Vectors in $\R^n$ are represented as column vectors and indexed by $x^i$, while dual vectors are represented as row vectors and indexed by $p_i$. For a function $\phi \colon \R^n \to \R$, its gradient $\nabla_x \phi(x)$ is always a row vector, while the gradient $\nabla_p \phi(p)$ of $\phi \colon (\R^n)^* \to \R$ is always a column vector. The derivative of a vector field $\phi \colon \R^n \to \R^n$ at $x$ is denoted by $D_x \phi(x)$.

The space ${\mathcal{C}}(X; Y)$ consists of continuous mappings from $X$ to $Y$. If $Y = \R$, we simply write ${\mathcal{C}}(X)$. The space ${\mathcal{C}}_0(\R^n)$ denotes the space of real functions on $\R^n$ that vanish at infinity, equipped with the uniform norm $\|\cdot\|_\infty$. 

The spaces ${\mathcal{C}}_c(\R^n)$ and ${\mathcal{C}}^1_c(\R^n)$
refer to continuous and continuously differentiable
functions with compact supports in $\R^n$, respectively.

The article will frequently feature expressions involving the Lebesgue integral \( \displaystyle\int_{A} \phi(x) \, d\mu(x) \) of various functions over different measure spaces \( (A, \mathcal{A}, \mu) \). The sigma-algebra \( \mathcal{A} \) of subsets of \( A \) is always assumed to be Borel. When no special clarification is needed, we will omit the integration domain and arguments for brevity, assuming that they are understood from the definition of the measure \( \mu \), i.e., we will write simply \( \displaystyle\int \phi \, d\mu \).

By $\mathcal{M}(\R^n)$ we denote the Banach space of finite signed Borel measures on $\R^n$. 
According to the Riesz-Markov representation theorem, $\mathcal{M}(\R^n)$ is isomorphic to $\mathcal{C}_0(\R^n)^*$ with the duality pairing $\langle \cdot, \cdot \rangle \doteq \langle \cdot, \cdot \rangle_{(\mathcal{M}, \mathcal{C}_0)}$  defined as:
\(
\displaystyle\langle \mu, \phi \rangle \doteq \int \phi \, \mathrm{d}\mu.
\)
We equip $\mathcal{M}(\R^n)$ with the Kantorovich-Rubinstein norm:
\[
\|\mu\|_K := \sup \left\{ \int \phi \, \mathrm{d}\mu : \phi \in \mathcal{C}_b(\R^n),\; \|\phi\|_{\text{Lip}} \leq 1 \right\},
\]
where $\|\phi\|_{\text{Lip}} \doteq \max\{\|\phi\|_\infty, \mathrm{Lip}(\phi)\}$, and $\mathrm{Lip}(\phi)$ denotes the minimal Lipschitz constant of $\phi$.

For any Borel mapping $F \colon \R^n \to \R^n$, the pushforward of a measure $\mu \in \mathcal{M}(\R^n)$ under $F$ is denoted by:
\[
F_\sharp \mu \doteq \mu \circ F^{-1}.
\]

By $\mathcal{M}_c(\R^n)$, we denote the set of all compactly supported measures in $\mathcal{M}(\R^n)$. The cone of nonnegative measures and the unit semi-sphere of probability measures in $\mathcal{M}(\R^n)$ are denoted by $\mathcal{M}^+(\R^n)$ and $\mathcal{P}(\R^n)$, respectively. Their subsets of measures with compact supports are denoted by $\mathcal{M}^+_c(\R^n)$ and $\mathcal{P}_c(\R^n)$. The space $\mathcal{P}_c(\R^n)$ is equipped with the $L^2$-Kantorovich distance:
\[
W_2(\rho_1, \rho_2) = \sqrt{\inf_{\Pi} \int_{\R^{2n}} |x-y|^2 \, \mathrm{d}\Pi(x, y)},
\]
where the infimum is taken over all transport plans $\Pi$ between $\rho_1$ and $\rho_2$. A transport plan between $\rho_1$ and $\rho_2$ is a probability measure on $\R^n\times \R^n$ such that $\pi^1_\sharp \Pi = \rho_1$ and $\pi^2_\sharp \Pi = \rho_2$, where $\pi^1,\pi^2\colon \R^n\times\R^n\to \R^n$ are the projections to the factors. We refer to~\cite{bogachevWeakConvergenceMeasures2018} for further details.

The symbol $\mathfrak{L}^n$ represents the Lebesgue measure on $\R^n$, while $\delta_x$ denotes the Dirac measure at $x$. The abbreviations ``a.e.'' and ``a.a.'' mean ``almost everywhere'' and ``almost all'' with respect to  (w.r.t.) $\mathfrak{L}^n$.

The spaces ${L}^1(I; \R^n)$ and ${L}^\infty(I; \R^n)$ consist of measurable Lebesgue-integrable and essentially bounded functions, respectively, defined on the interval $I$ with values in $\R^n$. Their norms are denoted as usual. By $L^2_\mu(\R^n; \R^n)$, we denote the Hilbert space of vector fields, which are square-integrable w.r.t. the measure $\mu$.

When dealing with multivariable functions, we often omit explicit dependence on ``free'' arguments. For example, a function $f = f(x, y, z)$ with a fixed $y$ is written as $f(y)$. Special emphasis is given to the time variable $t$, whose dependence is denoted by a subscript, e.g., $f_t(x, y, \ldots)$.

\section{Reduction to a continuity equation}

We begin with some preliminary facts, specifications, and standing assumptions. 

\subsection{Calculus on the Spaces of Measures}
\label{sec:deriv}

In this section, we recall some necessary elements of analysis and differential calculus in the space $\mathcal M_c(\R^n)$. 

\begin{definition}\label{def:local}
    Let $\mathcal{K}$ be a convex subset of $\mathcal{M}_c(\R^n)$, and 
    $Q \colon \mathcal{K} \to \R$ be a function.
    We say that a property of $Q$ is \emph{local} if it holds for the restriction of $Q$ to $\mathcal{K} \cap \mathcal{M}(E)$, where $E \subset \R^n$ is an arbitrary compact set. Here, $\mathcal{M}(E)$ denotes the set of all signed measures from $\mathcal{M}_c(\R^n)$ supported on $E$.
\end{definition}

For example, $Q$ is \emph{locally bounded} if, for any compact $E \subset \R^n$, there exists $C_E > 0$ such that $|Q(\mu)| \leq C_E$ for all $\mu \in \mathcal{K} \cap \mathcal{M}(E)$. Similarly, $Q$ is \emph{locally Lipschitz} if, for any compact $E \subset \R^m$, there exists $L_E > 0$ such that 
\[
|Q(\mu_1) - Q(\mu_2)| \leq L_E |\mu_1 - \mu_2|_E \quad \text{for all } \mu_1, \mu_2 \in \mathcal{K} \cap \mathcal{M}(E).
\]

\begin{definition}\label{def:diff}
    Let $\mathcal{K}$ be a convex subset of $\mathcal{M}_c(\R^n)$, and let $Q \colon \mathcal{K} \to \mathbb{R}$ be a function. Suppose there exists a continuous and locally bounded functional $\frac{\delta Q}{\delta \mu} \colon \mathcal{K} \times \mathbb{R}^n \to \mathbb{R}$ such that the condition
\begin{equation}
\lim_{t\to 0+}\frac{Q(\mu + t(\mu'-\mu))-Q(\mu)}{t} = \int \frac{\delta Q}{\delta \mu}(\mu,x)\d(\mu'-\mu)(x)\label{deriv}
\end{equation}
    holds for all $\mu, \mu' \in \mathcal{K}$. The map $\frac{\delta Q}{\delta \mu}$ is called the \emph{flat derivative} of $Q$. 

    If the flat derivative $\frac{\delta Q}{\delta \mu}$ is continuously differentiable in $x$ for any $\mu \in \mathcal{K}$, we define another map $\nabla_\mu Q \colon \mathcal{K} \times \mathbb{R}^n \to \mathbb{R}^n$ as 
\[
    \nabla_\mu Q(\mu, x) \doteq \nabla_x \frac{\delta Q}{\delta \mu}(\mu,x).
\]
    The map $\nabla_\mu Q$ is called the \emph{intrinsic derivative} of $Q$.

    We say that $Q$ is \emph{of class $\mathcal{C}^1(\mathcal{K})$} if it admits flat and intrinsic derivatives, both of which are \emph{locally bounded} and \emph{locally Lipschitz}. 
\end{definition}

\begin{remark}~
\begin{enumerate}
    \item Our definitions of flat and intrinsic derivatives mimic similar concepts introduced in \cite{cardaliaguetAnalysisSpaceMeasures2019} and \cite{CardMaster2019} for the cases $\mathcal{K} = \mathcal{P}(\R^n)$ and $\mathcal{K} = \mathcal{P}(\mathbb{T}^n)$, where $\mathbb{T}^n$ denotes the $n$-dimensional torus. In these cases (or, more generally, when $\mathcal{K}$ is such that all its elements $\mu$ share the same total charge $\mu(\R^n)$), the flat derivative is defined up to an additive constant and must satisfy an appropriate normalization condition.

In this paper, we study two classes of $\mathcal{C}^1$-maps: the first corresponds to the choice $\mathcal{K} = \mathcal{M}^+_c(\R^n)$, and the second to $\mathcal{K} = \mathcal{P}_c(\R^n \times \R^+)$. In the first case, since the elements of $\mathcal{K}$ may have different total variations, the original notion of flat derivative \cite{cardaliaguetAnalysisSpaceMeasures2019} is no longer applicable.

\item The presented concepts of derivative naturally extend to vector-valued maps $Q \colon \mathcal{K} \to \mathbb{R}^k$. In this case, the flat derivative $\frac{\delta Q}{\delta \mu}$ takes values in $\mathbb{R}^k$, and the intrinsic derivative, denoted by $D_\mu Q$, takes values in the space of $k \times n$ real matrices.
\end{enumerate}
\end{remark}

\begin{example}
    To illustrate the machinery behind Definition~\ref{def:diff}, we compute the flat and intrinsic derivatives of the functional
\[
\mu \mapsto G(\mu, x) = \iint S(x, x_1, x_2) \, d \mu(x_1) \, d \mu(x_2)
\]
from Section~\ref{ssec:model}.
In this case, we compute:
\begin{align*}
G\left(\mu + t (\mu'-\mu),x\right) - G\left(\mu,x\right) & = t^2\iint S(x,x_1,x_2)\d (\mu'-\mu)(x_1)\d(\mu'-\mu)(x_2)\\
&+t\iint S(x,x_1,x_2)\d \mu(x_1)\d (\mu'-\mu)(x_2)\\
&+t\iint S(x,x_1,x_2)\d \mu(x_2)\d (\mu'-\mu)(x_1)
\end{align*}
and derive:
\begin{align*}
\frac{\delta G}{\delta\mu}(\mu,x,x') 
&=  \int S(x,x_1,x')\d \mu(x_1)+\int S(x,x',x_2)\d \mu(x_2),
\end{align*}
further implying that
\[
\nabla_\mu G(\mu,x) = \nabla_{x'}\frac{\delta G}{\delta\mu}(\mu,x) = \int \nabla_{x'} S(x,x_1,x')\d \mu(x_1)+\int \nabla_{x'} S(x,x',x_2)\d \mu(x_2).
\]
\end{example}

\subsection{Assumptions}

In this section, we introduce the basic assumptions imposed on the functions $F$ and $G$. 

The first list of assumptions addresses their continuity properties.
\begin{assumption}\label{assumption-1}~
\begin{itemize}
  \item The functions $(t,u)\mapsto F_t(u,\mu,x)$ and $(t,u)\mapsto G_t(u,\mu,x)$ are continuous for all $\mu\in \mathcal X$, $x\in \mathbb R^n$;
	
  \item the functions \( (\mu,x)\mapsto F_t(u,\mu,x) \) and $(\mu,x)\mapsto G_t(u,\mu,x)$ are locally Lipschitz, i.e., for any compact $E\subset \R^n$ there exists \( M_E > 0 \) such that
	    \begin{gather*}
        \left|F_t(u,\mu,x) - F_t(u,\mu,x') \right|\le  M_E\left(|x-x'| + \|\mu-\mu'\|_K\right), \\
        \left|G_t(u,\mu,x) - G_t(u,\mu,x') \right|\le  M_E\left(|x-x'| + \|\mu-\mu'\|_K\right),
	  \end{gather*}
		for all \(t\in  I \), \(u \in U\), $\mu,\mu'\in \mathcal M^+(E)$, \(x, x'\in E \);
  \item $F$ and $G$ are sublinear: there exists $C>0$ such that 
   \begin{gather*}
        \left|F_t(u,\mu,x)\right|\le  C(1+\mu(\R^n)), \quad
        \left|G_t(u,\mu,x)\right|\le  C,
	  \end{gather*}
  for all \(t\in  I \), \(u \in U\), $\mu\in \mathcal X$, \(x\in \R^n \).
\end{itemize}
\end{assumption}
The second kit of assumptions is related to the differentiability properties of $F$ and $G$.
\begin{assumption}\label{as-2}~
    \begin{itemize}
    \item The maps $x\mapsto F_t(u,\mu,x)$ and $x\mapsto G_t(u,\mu,x)$ are continuously differentiable for each $t\in I$, $u\in U$ and $\mu\in \mathcal{M}^+(\R^n)$; 
    \item the functions $\mu\mapsto F_t(u,\mu,x)$ and $\mu\mapsto G_t(u,\mu,x)$ are of the class $\mathcal C^1(\mathcal X)$ for each $t\in I$, $u\in U$ and $x\in \R^n$;
    \item the derivatives $D_x F$, $\nabla_xG$, $D_\mu F$, $\nabla_\mu G$ are continuous in $(t,u)$;
    \item the maps $D_x F_t(u,\cdot,\cdot,\cdot)$, $\nabla_xG_t(u,\cdot,\cdot,\cdot)$, $D_\mu F_t(u,\cdot,\cdot,\cdot)$, $\nabla_\mu G_t(u,\cdot,\cdot,\cdot)$ are locally bounded and locally Lipschitz for each $(t,u)\in I\times U$ with some constants independent of $(t,u)$.
\end{itemize}
\end{assumption}

\subsection{State Equation}

Let $\vartheta \in \mathcal X$, and consider the Cauchy problem \eqref{eq:u-blaw}.
This PDE is understood in the sense of distributions, namely, for any test function $\phi \in \mathcal C^1(\R^n)$, the action map $t \mapsto \langle \mu_t, \phi\rangle$ is absolutely continuous as $I \to \R$, and, for a.a. $t \in I$, it holds:
\begin{align}\label{dist}
    \frac{d}{dt} \langle \mu_t, \phi\rangle = \left\langle \mu_t, \nabla_x \phi \cdot F_t(u_t, \mu_t) + G_t(u_t, \mu_t) \, \phi \right\rangle. 
\end{align}
By \cite[Theorem~1]{pogodaevNonlocalBalanceEquations2022}, the regularity assumption \ref{assumption-1} guarantees the existence of a unique distributional solution $\mu_{(\cdot)} =\mu_{(\cdot)}[u]: t \mapsto \mu_t \doteq \mu_t[u]$ to the initial value problem \eqref{eq:u-blaw} under any fixed control $u_{(\cdot)} \in \mathcal U$. 

Building on this result, we now establish a useful ``characteristic'' representation of $\mu$.

\begin{lemma} 
Let $u_{(\cdot)}$ and $\mu_{(\cdot)}$ be defined as above. Introduce a non-autonomous vector field $f\colon I \times \R^n \to \R^n$ and a function $g\colon I \times \R^n \to \R$ by:
\[
    f_t(x) \doteq F_t(u_t, \mu_t, x), \ \ \text{and}\ \  g_t(x) \doteq G_t(u_t, \mu_t, x).
\]

The action of measures $\mu_t$ on test functions $\phi \in \mathcal C^1(\R^n)$ can be expressed as follows: 
\begin{equation}
     \langle \mu_t, \phi\rangle = \left\langle\vartheta, Y_t \, \phi(X_t)\right\rangle, \quad t \in I,\label{chrs}
\end{equation}
where $t \mapsto X_{t}(x)$ 
is a unique solution to the ordinary system:
\[
    \dot X = f_t(X), \quad X_0 = x,
\]
and 
$t \mapsto Y_{t}(x)$ is a unique solution 
$$
Y_{t}(x) \doteq \exp\left(\int_0^{t} g_\tau\left(X_{\tau}(x)\right) \d\tau \right),
$$ 
to the linear scalar Cauchy problem:
\[
    \dot Y = g_t\left(X_{t}{(x})\right) \, Y, \quad Y_0 = 1.
\]
\end{lemma}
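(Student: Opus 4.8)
The plan is to verify that the curve $t\mapsto\nu_t$ given by the right-hand side of \eqref{chrs} is a distributional solution of the \emph{linear} balance law obtained from \eqref{eq:u-blaw} by freezing the coefficients, i.e.\ of $\partial_t\rho_t+\nabla_x\cdot(f_t\rho_t)=g_t\rho_t$, $\rho_0=\vartheta$, and then to invoke uniqueness for this linear Cauchy problem; since $\mu_{(\cdot)}$ solves the same problem --- this is precisely \eqref{dist} with $f$, $g$ in place of $F_t(u_t,\mu_t,\cdot)$, $G_t(u_t,\mu_t,\cdot)$ --- the two curves must coincide. Preliminarily, I would record the regularity of the characteristic system: since $u_{(\cdot)}$ is fixed and $t\mapsto\mu_t$ is the known distributional solution, Assumption~\ref{assumption-1} makes $f_t(x)=F_t(u_t,\mu_t,x)$ a Carath\'eodory field that is locally Lipschitz in $x$ uniformly in $t\in I$ and of at most linear growth in $x$; here one uses that $t\mapsto\mu_t(\R^n)$ is bounded on $I$ (Gr\"onwall, via $|G|\le C$), whence $\bigcup_{t\in I}\spt\mu_t$ lies in a fixed compact ball $K$. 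Consequently $x\mapsto X_t(x)$ is, for each $t$, a bi-Lipschitz homeomorphism of $\R^n$ --- a $\mathcal C^1$-diffeomorphism under Assumption~\ref{as-2} --- with $t\mapsto X_t(x)$ absolutely continuous, and, as $g_t(x)=G_t(u_t,\mu_t,x)$ is bounded and Carath\'eodory, $Y_t(x)\in[e^{-CT},e^{CT}]$ is well defined with $t\mapsto Y_t(x)$ absolutely continuous. In particular the test functions in \eqref{dist} and \eqref{chrs} may be taken in $\mathcal C^1_c(\R^n)$.

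Next I would differentiate. Set $\nu_t\doteq(X_t)_\sharp(Y_t\,\vartheta)$, so that $\langle\nu_t,\phi\rangle=\int Y_t(x)\,\phi(X_t(x))\,\d\vartheta(x)$ is the right-hand side of \eqref{chrs}, and note $\nu_0=\vartheta$ since $X_0=\id$, $Y_0\equiv1$. Fix $\phi\in\mathcal C^1_c(\R^n)$ and put $h(t)\doteq\langle\nu_t,\phi\rangle$. Since $\spt\vartheta$ is compact and $X_t(\spt\vartheta)\subset K$ for all $t$, for a.a.\ $t$ the integrand $x\mapsto Y_t(x)\phi(X_t(x))$ is differentiable in $t$ with derivative bounded uniformly in $x\in\spt\vartheta$; differentiating under the integral sign and substituting $\dot X_t(x)=f_t(X_t(x))$ and $\dot Y_t(x)=g_t(X_t(x))Y_t(x)$ yields, for a.a.\ $t\in I$,
\[
\dot h(t)=\int Y_t(x)\Big[\nabla_x\phi(X_t(x))\cdot f_t(X_t(x))+g_t(X_t(x))\,\phi(X_t(x))\Big]\d\vartheta(x)=\big\langle\nu_t,\ \nabla_x\phi\cdot f_t+g_t\,\phi\big\rangle .
\]
Hence $t\mapsto\nu_t$ is a distributional solution of the frozen-coefficient balance law, as desired.

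Finally I would invoke uniqueness for $\partial_t\rho_t+\nabla_x\cdot(f_t\rho_t)=g_t\rho_t$, $\rho_0=\vartheta$, in $\mathcal X$. This is the $\mu$-free instance of \cite[Theorem~1]{pogodaevNonlocalBalanceEquations2022}; alternatively it follows directly by duality: for fixed $t_1\in I$ and $\psi\in\mathcal C^1_c(\R^n)$, the method of characteristics gives the explicit solution $\phi_s(x)=\psi\big(X_{t_1}(X_s^{-1}(x))\big)\exp\!\big(\int_s^{t_1}g_\tau(X_\tau(X_s^{-1}(x)))\,\d\tau\big)$ of the backward problem $\partial_s\phi_s+\nabla_x\phi_s\cdot f_s+g_s\phi_s=0$, $\phi_{t_1}=\psi$, which is compactly supported and $\mathcal C^1$ in $x$ and absolutely continuous in $s$; feeding it into (the time-dependent-test-function version of) \eqref{dist} gives $\frac{d}{ds}\langle\rho_s,\phi_s\rangle=0$, so $\langle\rho_{t_1},\psi\rangle=\langle\vartheta,\phi_0\rangle$ depends on the data only. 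Applying this to $\rho=\mu_{(\cdot)}$ and $\rho=\nu_{(\cdot)}$ forces $\mu_{t_1}=\nu_{t_1}$ for every $t_1\in I$, which is \eqref{chrs}. The only step requiring genuine care is this last one: uniqueness must be used with coefficients that are merely Carath\'eodory in $t$ (through the control) and that act on general nonnegative measures rather than on densities; the rest reduces to the classical flow theory for Lipschitz-in-space vector fields and to differentiation under the integral sign.
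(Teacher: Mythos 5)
Your proposal is correct and follows essentially the same route as the paper: differentiate the candidate action $t\mapsto\langle\vartheta, Y_t\,\phi(X_t)\rangle$ by Leibniz's rule to see that it satisfies the frozen-coefficient distributional identity \eqref{dist}, then conclude by uniqueness of the distributional solution to the resulting linear balance law. You are merely more explicit than the paper about the flow regularity and about the uniqueness step (which the paper leaves implicit, relying on \cite[Theorem~1]{pogodaevNonlocalBalanceEquations2022}).
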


\begin{proof} First, notice that $f$ and $g$ are bounded Carath\'{e}odory functions due to hypotheses \ref{assumption-1}, which implies the existence of a unique solution of the characteristic system of ordinary differential equations (ODEs). 

The result is now derived by Leibniz's rule, using the absolute continuity of the action $t \mapsto \langle \mu_t, \phi\rangle$ and the definition \eqref{dist} of a distributional solution to \eqref{eq:u-blaw}:
\begin{align*}
    \frac{d}{dt}\langle \mu_t, \phi\rangle & = \left\langle\vartheta, Y_{t} \{\nabla_x \phi (X_{t}) \cdot f_t(X_{t}) + g_t(X_{t}) \, \phi(X_{t})\}\right\rangle \\
    & = \left\langle \mu_t, \nabla_x \phi \cdot f_t + g_t \, \phi\right\rangle 
    \\
    &\doteq \left\langle \mu_t, \nabla_x \phi \cdot F_t(u_t, \mu_t) + G_t(u_t, \mu_t) \, \phi\right\rangle. 
\end{align*}
\end{proof}

\begin{remark}
This lemma provides a microscopic (agent-wise) interpretation of the distributed (macroscopic) model \eqref{eq:u-blaw}: the measure $\mu_t$
can be viewed as the distribution law for a population of microscopic particles, each characterized by its position $x(t)$ and mass $y(t)$. The position dynamics follow the vector field $f$, and the mass evolves according to the exponential law governed by the function $g$. Note that the formulas \eqref{chrs} are not ``characteristic representations'' in the classical sense of linear PDE theory, as they require prior knowledge of $\mu$.   
\end{remark}

\begin{remark}
When $\vartheta \in \mathcal X$, 
it follows from \eqref{chrs} that all measures $\mu_t$ remain in $\mathcal X$. In other words, the cone of nonnegative measures is invariant under the system \eqref{eq:u-blaw}.
\end{remark}

\subsection{Barycentric Projection}

A brief inspection of the representation \eqref{chrs} reveals a ``barycentric'' dependence of $\mu_t$ on the auxiliary variable $y$, enabling the elimination of the source term in \eqref{eq:u-blaw} by embedding the corresponding dynamics into the ``flow part.''~To formalize this idea, introduce the linear operator of barycentric projection on a component of a product space: 
\begin{equation}\label{beta}
     \beta\colon  \mathcal M_c(\mathbb R^{n+1})\to \mathcal{M}_c(\mathbb R^n), \qquad   \left\langle \beta(\rho), \phi \right\rangle \doteq \int_{\R^{n+1}} y \, \phi(x) \, \mathrm{d}\rho(x, y), \quad \phi \in \mathcal{C}_0(\mathbb R^{n}).
\end{equation}

In general $\beta$ ranges over signed measures. However, it is easy to see that the image of the set
\[
\mathcal Y \doteq \mathcal{P}_c(\R^n\times \R^+)
\] 
belongs to $\mathcal X \doteq \mathcal M_c^+(\R^n)$. In what follows, unless explicitly stated otherwise, we consider $\beta$ as a map from $\mathcal Y$ to $\mathcal X$.

Let us introduce the notation \(\bm x \doteq [x, y]^\T\). By using the operator $\beta$, we define the nonlocal vector field:
\[
     \bm{F}: I \times U \times {\mathcal Y} \times \mathbb{R}^{n+1} \to \mathbb{R}^{n+1}, \ \ \ \
   \bm{F}_t(u, \rho, \bm x) \doteq \begin{bmatrix} F_t(u, \beta(\rho), x) \\ y \, G_t(u, \beta(\rho), x) \end{bmatrix},
\]
and consider the initial value problem for a nonlocal continuity equation on the metric space $(\mathcal Y, W_2)$:
\begin{align}\label{eq:contest}
    \partial_t \rho_t + \nabla_{\bm x} \cdot \left[\bm{F}_t(u_t, \rho_t) \, \rho_t\right] = 0, \quad \rho_0 = \vartheta \otimes \delta_1.
\end{align}
This PDE is understood in the sense of distributions:
\begin{equation}\label{eq:rho_wf}
    \frac{\mathrm{d}}{\mathrm{d}t} \langle \rho_t, \bm \phi \rangle = \left\langle \rho_t, \nabla_{\bm x} \bm \phi \cdot \bm F_t(u_t, \rho_t) \right\rangle,
\end{equation}
for all $\bm \phi \in \mathcal{C}^1(\mathbb{R}^{n}\times \R^+)$, where:
\[
    \nabla_{\bm x} \bm \phi \cdot {\bm F}_t({u}, \rho) \doteq \nabla_{(x, y)} \bm \phi \cdot {\bm F}_t({u}, \rho) \doteq  \nabla_{x} \bm \phi \cdot F_t({u}, \beta(\rho), x) + y \, \partial_{y} \bm \phi \, G_t({u}, \beta(\rho), x).
\]

We will see in the following section that $\beta\colon \mathcal{Y} \to \mathcal{X}$ is locally Lipschitz (in the sense of Definition~\ref{def:local}) when $\mathcal{Y}$ and $\mathcal{X}$ are furnished, respectively, with the $L^2$-Kantorovich distance $W_2$ and the Kantorovich-Rubsenstein distance, generated by $\|\cdot\|_K$. As a consequence, $\bm{F}$ satisfies the standard regularity assumptions \cite{piccoliTransportEquationNonlocal2013}, ensuring the well-posedness of the Cauchy problem \eqref{eq:contest}.

We now show that there is one-to-one correspondence between solutions of the continuity equation~\eqref{eq:contest} and the original balance law \eqref{eq:u-blaw}.

\begin{theorem}\label{murhopro}
Suppose that assumptions \emph{\ref{assumption-1}} are fulfilled. Then, the unique solutions $t \mapsto \mu_t$ and $t \mapsto \rho_t$  to the Cauchy problems \eqref{eq:u-blaw} and \eqref{eq:contest}, respectively, are related as follows:
\begin{equation}
    \mu_t = \beta(\rho_t), \quad \forall t \in I.\label{beta-mu}
\end{equation}
\end{theorem}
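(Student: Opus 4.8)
The plan is to show that $t\mapsto\beta(\rho_t)$ is a distributional solution of the balance law~\eqref{eq:u-blaw} with the prescribed initial datum, and then invoke the uniqueness assertion of \cite[Theorem~1]{pogodaevNonlocalBalanceEquations2022} (valid under~\ref{assumption-1}) to conclude $\mu_t=\beta(\rho_t)$ for all $t\in I$. First I would check the initial condition: $\beta(\vartheta\otimes\delta_1)=\vartheta$, which is immediate from the definition~\eqref{beta}, since $\int_{\R^{n+1}} y\,\phi(x)\,\mathrm d(\vartheta\otimes\delta_1)(x,y)=\int \phi(x)\,\mathrm d\vartheta(x)$.

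Next I would set $\nu_t\doteq\beta(\rho_t)$ and verify the weak formulation~\eqref{dist}. Fix a test function $\phi\in\mathcal C^1(\R^n)$ and define $\bm\phi(x,y)\doteq y\,\phi(x)$ on $\R^n\times\R^+$. By the definition of $\beta$ we have $\langle\nu_t,\phi\rangle=\langle\rho_t,\bm\phi\rangle$, so $t\mapsto\langle\nu_t,\phi\rangle$ is absolutely continuous because $t\mapsto\langle\rho_t,\bm\phi\rangle$ is (solutions of~\eqref{eq:contest} are understood in the sense~\eqref{eq:rho_wf}; one should note $\bm\phi$ need not have compact support in $y$, but since $\rho_t$ stays compactly supported — see below — we may multiply $\bm\phi$ by a cutoff in $y$ without changing the pairings on the relevant region, so the weak formulation applies). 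Then, for a.a.\ $t$,
\begin{align*}
\frac{\mathrm d}{\mathrm dt}\langle\nu_t,\phi\rangle
&=\frac{\mathrm d}{\mathrm dt}\langle\rho_t,\bm\phi\rangle
=\left\langle\rho_t,\nabla_{\bm x}\bm\phi\cdot\bm F_t(u_t,\rho_t)\right\rangle\\
&=\left\langle\rho_t,\ y\,\nabla_x\phi(x)\cdot F_t(u_t,\beta(\rho_t),x)+y\,\phi(x)\,G_t(u_t,\beta(\rho_t),x)\right\rangle,
\end{align*}
where I used $\nabla_x\bm\phi=y\,\nabla_x\phi$ and $\partial_y\bm\phi=\phi$ together with the explicit form of $\bm F$. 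Now both terms in the last pairing are of the form $y\,\psi(x)$ with $\psi\in\mathcal C(\R^n)$, so by the very definition~\eqref{beta} of $\beta$ each equals $\langle\beta(\rho_t),\psi\rangle=\langle\nu_t,\psi\rangle$. Since $\beta(\rho_t)=\nu_t$ appears also inside $F_t$ and $G_t$, this yields exactly
\[
\frac{\mathrm d}{\mathrm dt}\langle\nu_t,\phi\rangle=\left\langle\nu_t,\ \nabla_x\phi\cdot F_t(u_t,\nu_t)+G_t(u_t,\nu_t)\,\phi\right\rangle,
\]
which is~\eqref{dist}. Hence $t\mapsto\nu_t$ solves~\eqref{eq:u-blaw}, and by uniqueness $\mu_t=\nu_t=\beta(\rho_t)$.

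Two technical points need care, and the second is the main obstacle. The first is the justification, already flagged, that a test function of product form $y\,\phi(x)$ — which grows linearly in $y$ and is not compactly supported — is admissible in the weak formulation~\eqref{eq:rho_wf}; this requires knowing that $\spt\rho_t$ stays within a fixed compact set of $\R^n\times\R^+$ on $I$, so that the behaviour of $\bm\phi$ at infinity is irrelevant and a standard truncation argument applies. This follows from the sublinearity bounds in~\ref{assumption-1}: the $x$-component of $\bm F$ is bounded in terms of the total mass $\beta(\rho_t)(\R^n)=\langle\rho_t,y\rangle$, while the $y$-component gives $\dot y=y\,G_t(\cdots)$ with $|G_t|\le C$, so along characteristics $y$ stays in $[e^{-CT},e^{CT}]$, the total mass of $\beta(\rho_t)$ stays bounded, and then the $x$-characteristics have bounded speed, confining $\spt\rho_t$ to a compact set — this is the same Gr\"onwall-type argument underlying the well-posedness of~\eqref{eq:contest} cited from~\cite{piccoliTransportEquationNonlocal2013}, and it also guarantees $\beta(\rho_t)\in\mathcal X$. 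The second and more delicate point is that the uniqueness in~\cite{pogodaevNonlocalBalanceEquations2022} is for distributional solutions with values in $\mathcal X$ that moreover satisfy the compact-support and absolute-continuity requirements; one must check that $\beta(\rho_{(\cdot)})$ meets all of these, which again reduces to the a priori support bound together with the local Lipschitz continuity of $\beta$ (to be established in the next section) so that $t\mapsto\beta(\rho_t)$ inherits the needed continuity from $t\mapsto\rho_t$. Once these are in place the identification is complete.
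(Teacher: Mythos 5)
Your proposal is correct and follows essentially the same route as the paper: test the weak formulation of the continuity equation \eqref{eq:rho_wf} against functions of the product form $\bm\phi(x,y)=y\,\phi(x)$, observe that every resulting term is a pairing of $\beta(\rho_t)$ with a function of $x$ alone, and conclude by the uniqueness of distributional solutions to the balance law. The extra technical care you devote to the admissibility of non-compactly-supported test functions and to the a priori support bound is sensible but not needed here, since the paper's weak formulations \eqref{dist} and \eqref{eq:rho_wf} are already stated for all $\mathcal{C}^1$ test functions.
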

\begin{proof}
    By restricting the definition \eqref{eq:rho_wf}  to test functions of the structure $\bm \phi(x, y) = y \, \phi(x)$, $\phi \in \mathcal C^1(\R^n)$, 
    we obtain:
\begin{align*}
    \frac{d}{dt} \int_{\R^{n+1}} y \, \phi(x) \d \rho_t(x, y) & \doteq \frac{d}{dt} \int_{\R^{n}\times \R^+} \bm \phi \d \rho_t\\ & = \left\langle \rho_t, \nabla_{(x,y)} \bm \phi \cdot \bm F_t\left(u_t, \rho_t\right) \right\rangle\\
    & = \int_{\R^{n}\times \R^+} y \, \nabla_x \phi({x}) \cdot F_t\left({u}_t, \beta(\rho_t), x\right) \d {\rho_t}(x, y)\\ 
    & + \int_{\R^{n}\times \R^+} y \, \phi(x) \, G_t\left(u_t,\beta(\rho_t), x\right) \d{\rho_t}(x,y).
\end{align*}
Denoting
$\pi_t \doteq \beta(\rho_t)$,
we rewrite:
\begin{align*}
    \frac{d}{dt} \langle \pi_t, \phi \rangle & = \int_{\R^n} \nabla_x \phi \cdot F_t(u_t, \pi_t) \d \pi_t + \int_{\R^n} \phi \, G_t(u_t, \pi_t) \d{\pi_t}\\
    & = \big\langle \pi_t, G_t(u_t, \pi_t)\, \phi\big\rangle
    + \big\langle \pi_t, \nabla_x \phi\cdot G_t(u_t,\pi_t)\big\rangle.
\end{align*}
The uniqueness of a solution to the balance law then implies $\pi_t = \mu_t$ for all $t \in I$, which completes the proof.
\end{proof}

\subsection{Properties of the Barycentric Projection}

We now establish some useful properties of the operator $\beta$. First, let us demonstrate that $\beta$ preserves Lipschitz continuity.

\begin{lemma}\label{lem:betalip}
For each $b\ge 1$, the map \(\beta \colon\mathcal P_c(\R^n \times [0,b]) \to \mathcal M^+(\R^n)\) 
is $2b$-Lipschitz.
\end{lemma}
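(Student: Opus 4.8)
The plan is to estimate the action of $\beta(\rho_1) - \beta(\rho_2)$ on an arbitrary test function $\phi$ with $\|\phi\|_{\Lip} \le 1$, and to bound this by a constant times a distance between $\rho_1$ and $\rho_2$ that is controlled by $W_2$. Fix $\rho_1, \rho_2 \in \mathcal{P}_c(\R^n \times [0,b])$ and let $\Pi$ be an optimal transport plan between them realizing $W_2(\rho_1,\rho_2)$. For $\phi \in \mathcal{C}_b(\R^n)$ with $\|\phi\|_{\Lip} \le 1$, I would write
\[
\langle \beta(\rho_1) - \beta(\rho_2), \phi\rangle = \int_{\R^{n+1}} y\,\phi(x)\,\mathrm{d}\rho_1(x,y) - \int_{\R^{n+1}} y'\,\phi(x')\,\mathrm{d}\rho_2(x',y') = \int \big( y\,\phi(x) - y'\,\phi(x')\big)\,\mathrm{d}\Pi\big((x,y),(x',y')\big).
\]
The integrand is then split as $y\,\phi(x) - y'\,\phi(x') = y\,(\phi(x) - \phi(x')) + (y - y')\,\phi(x')$, so that the absolute value of the integrand is at most $|y|\,|x - x'| + |y - y'|\,|\phi(x')| \le b\,|x-x'| + |y - y'|$, using $y \le b$, $\|\phi\|_\infty \le 1$, and $\Lip(\phi) \le 1$. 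Since $|x - x'| + |y - y'| \le \sqrt{2}\,|(x,y) - (x',y')|$ and $b \ge 1$, the integrand is bounded by $\sqrt{2}\,b\,|(x,y)-(x',y')|$ pointwise on $\R^{n+1}\times\R^{n+1}$.

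Integrating against $\Pi$ and applying the Cauchy--Schwarz inequality (or Jensen's inequality, since $\Pi$ is a probability measure) gives
\[
\big|\langle \beta(\rho_1) - \beta(\rho_2), \phi\rangle\big| \le \sqrt{2}\,b \int |(x,y)-(x',y')|\,\mathrm{d}\Pi \le \sqrt{2}\,b \left(\int |(x,y)-(x',y')|^2\,\mathrm{d}\Pi\right)^{1/2} = \sqrt{2}\,b\, W_2(\rho_1,\rho_2).
\]
Taking the supremum over all admissible $\phi$ yields $\|\beta(\rho_1) - \beta(\rho_2)\|_K \le \sqrt{2}\,b\, W_2(\rho_1,\rho_2) \le 2b\, W_2(\rho_1,\rho_2)$, which is the claimed $2b$-Lipschitz bound. (One should also note $\beta(\rho_i) \in \mathcal{M}^+(\R^n)$: nonnegativity is clear since $y \ge 0$ on the support, and $\beta(\rho_i)(\R^n) = \int y \, \mathrm{d}\rho_i \le b$, so the supports are compact and the image indeed lies in $\mathcal{M}^+(\R^n)$, in fact in $\mathcal{M}^+_c(\R^n)$.)

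I do not anticipate a serious obstacle here; the argument is a direct transport-plan estimate. The only minor point requiring a little care is the split of the bilinear integrand $y\phi(x) - y'\phi(x')$ — one must peel off the $y$-difference and the $\phi$-difference separately and use the two parts of the $\Lip$-norm ($\|\phi\|_\infty$ and $\Lip(\phi)$) in the right places — together with the passage from the $\ell^1$-type bound $|x-x'| + |y-y'|$ to the Euclidean norm on $\R^{n+1}$ so that $W_2$ (rather than some other Wasserstein-type quantity) appears. The constant $\sqrt 2\,b$ is in fact sharp in this argument, and it is absorbed into $2b$ using $\sqrt 2 \le 2$; if one prefers the cleaner constant, the statement could be improved to $\sqrt{2}\,b$, but $2b$ suffices for all subsequent uses.
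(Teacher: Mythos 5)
Your argument is correct and follows essentially the same route as the paper: couple $\rho_1,\rho_2$ by an optimal plan $\Pi$, split the integrand $y\phi(x)-y'\phi(x')$ using the two halves of the $\Lip$-norm and the bound $y\le b$, and pass to $W_2$ via H\"older/Jensen. The only (cosmetic) difference is that you absorb the two error terms into a single pointwise bound $\sqrt2\,b\,|(x,y)-(x',y')|$ before integrating, whereas the paper applies H\"older to each term separately and lands on the constant $b+1\le 2b$; both yield the stated $2b$.
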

\begin{proof}
    Fix two probability measures $\rho,\rho'\in \mathcal P_c(\R^n\times [0,b])$ and denote by $\Pi$ an optimal transport plan between $\rho$ and $\rho'$ in the sense \cite{ambrosioLecturesOptimalTransport2021}.
    For any test function $\phi \in C_b(\R^n)$ such that $\|\phi\|_{\Lip}\le 1$, we have 
    \begin{align*}
        \left|\left\langle \beta(\rho),\phi\right\rangle - \left\langle \beta(\rho'),\phi\right\rangle \right| 
        &= 
        \left|\int y \phi(x) \d\rho'(x,y)- \int y' \phi(x') \d\rho'(x',y') \right|\\
        &= 
        \left|\int \left(y \phi(x) -  y' \phi(x')\right) \d \Pi(x,y,x',y')\right| \\
        &\le 
        \int \left|y \phi(x) -  y' \phi(x')\right| \d \Pi(x,y,x',y')\\
        &\le 
        \int |y| \left|\phi(x) -  \phi(x')\right| \d \Pi(x,y,x',y') \\ 
        &+
        \int |\phi(x)| \left|y_1-y'\right| \d \Pi(x,y,x',y')\\
         &\le 
        \int |y| \left|x- x'\right| \d \Pi(x,y,x',y') \\ 
        &+
        \int \left|y-y'\right| \d \Pi(x,y,x',y').
    \end{align*}
    We apply H\"older's inequality to the latter integrals and obtain
     \begin{align*}
        \left|\left\langle \beta(\rho_1),\phi\right\rangle - \left\langle \beta(\rho_2),\phi\right\rangle \right| 
         &\le 
        \left(\int |y|^2 \d \Pi(x,y,x',y')\right)^{1/2}\left(\int |x-x'|^2 \d \Pi(x,y,x',y')\right)^{1/2} \\ 
        &+
        \left(\int 1^2 \d \Pi(x,y,x',y')\right)^{1/2}\left(\int (y-y')^2 \d \Pi(x,y,x',y')\right)^{1/2}\\
        &\le 2b \left(\int \left(|x-x'|^2 +(y-y')^2\right) \d \Pi(x,y,x',y')\right)^{1/2}.
    \end{align*}
    Since the transport plan $\Pi$ is optimal, the right-hand side can be written as $2b W_2(\rho,\rho')$. It remains to note that the inequality 
    \[
    \left|\left\langle \beta(\rho),\phi\right\rangle - \left\langle \beta(\rho'),\phi\right\rangle \right| \le 2b W_2(\rho,\rho')
    \]
    holds for any continuous test function $\phi$ with $\|\phi\|_{\Lip}\le 1$. This implies, by the very definition of the Kantorovich-Rubinstein norm, that $\|\beta(\rho)-\beta(\rho')\|_K\le 2b W_2(\rho,\rho')$, completing the proof.
\end{proof}

Since the operator $\beta$ maps the state space ${\mathcal{Y}}$ of the reduced system \eqref{eq:contest} into the original state space $\mathcal{X}$, it allows transferring any functional $Q \colon \mathcal{X} \to \R$ to a functional $\widehat{Q}\colon\mathcal{Y}\to \R$ using the pullback operation:
\[
\widehat{Q}(\rho) \doteq Q(\beta(\rho)), \quad \rho \in {\mathcal Y}.
\]
Lemma~\ref{lem:betalip} immediately implies the following result.
\begin{lemma}\label{lem:qlip}
    If $Q \colon \mathcal{X} \to \R$ is locally Lipschitz, then $\widehat{Q} \colon \mathcal{Y} \to \R$ is also locally Lipschitz.
\end{lemma}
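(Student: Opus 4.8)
The plan is to compose the local Lipschitz property of $Q$ with the Lipschitz estimate for $\beta$ supplied by Lemma~\ref{lem:betalip}, after a routine localization.

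First I would fix an arbitrary compact set $E \subset \R^n \times \R^+$; the goal is to produce a constant $L_E$ with $|\widehat{Q}(\rho_1) - \widehat{Q}(\rho_2)| \le L_E\, W_2(\rho_1,\rho_2)$ for all $\rho_1,\rho_2 \in \mathcal{Y}$ supported on $E$. Since the coordinate projections of $E$ are compact, I can pick a compact set $E' \subset \R^n$ and a number $b \ge 1$ (enlarging $b$ past $1$ if necessary) such that $E \subseteq E' \times [0,b]$. Then every $\rho \in \mathcal{Y}$ with $\spt\rho \subseteq E$ belongs to $\mathcal{P}_c(\R^n \times [0,b])$, so Lemma~\ref{lem:betalip} gives
\[
\|\beta(\rho_1) - \beta(\rho_2)\|_K \le 2b\, W_2(\rho_1,\rho_2)
\]
for any two such measures.

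Next I would verify that $\beta$ respects the localization: if $\spt\rho \subseteq E' \times [0,b]$ and $\phi \in \mathcal{C}_0(\R^n)$ vanishes on $E'$, then by \eqref{beta} one has $\langle \beta(\rho),\phi\rangle = \int y\,\phi(x)\,\mathrm{d}\rho(x,y) = 0$, since $\phi(x) = 0$ for $\rho$-almost every $(x,y)$; hence $\spt(\beta(\rho)) \subseteq E'$, and, $\beta(\rho)$ being a nonnegative measure of total mass at most $b$, we get $\beta(\rho) \in \mathcal{X} \cap \mathcal{M}(E')$. Applying the local Lipschitz continuity of $Q$ on the compact $E'$ yields a constant $L_{E'}$ with $|Q(\beta(\rho_1)) - Q(\beta(\rho_2))| \le L_{E'}\|\beta(\rho_1) - \beta(\rho_2)\|_K$. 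Chaining the two inequalities produces $|\widehat{Q}(\rho_1) - \widehat{Q}(\rho_2)| \le 2b\,L_{E'}\, W_2(\rho_1,\rho_2)$, so $L_E \doteq 2b\,L_{E'}$ works, and $\widehat{Q}$ is locally Lipschitz on $\mathcal{Y}$.

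There is no real obstacle here: the proof is essentially a one-line composition once Lemma~\ref{lem:betalip} is available. The only points deserving a moment's attention are the support inclusion $\spt(\beta(\rho)) \subseteq E'$, which is exactly what ties the compact set relevant for $Q$ to the chosen $E$, and the bookkeeping that the norm in which $Q$ is assumed locally Lipschitz on $\mathcal{X}$ is precisely $\|\cdot\|_K$, the one appearing on the right-hand side of the estimate in Lemma~\ref{lem:betalip}; both are immediate from the definitions.
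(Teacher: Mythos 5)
Your proof is correct and is exactly the argument the paper intends: the paper gives no written proof beyond the remark that Lemma~\ref{lem:betalip} ``immediately implies'' the result, and what you have written out --- localizing to $E\subseteq E'\times[0,b]$, checking $\spt(\beta(\rho))\subseteq E'$, and composing the $2b$-Lipschitz bound for $\beta$ with the local Lipschitz constant of $Q$ on $E'$ --- is precisely that composition made explicit. No gaps.
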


Finally, we show that the pullback operation preserves the property of differentiability.
\begin{lemma}\label{lem:qdiff}
    If $Q \in \mathcal{C}^1\left(\mathcal{X}\right)$, then $\widehat{Q} \in \mathcal{C}^1({\mathcal Y})$. Moreover, the flat and intrinsic derivatives of these functions are related as follows:
    \begin{equation}
    \frac{\delta\widehat{Q}}{\delta \rho}(\rho, \bm x) = y \frac{\delta Q}{\delta \mu}(\beta(\rho),x), \;
    \nabla_{\rho} \widehat{Q}(\rho, \bm x) \doteq \nabla_\mu Q(\beta(\rho),x,y) =
    \begin{bmatrix}
        y \nabla_\mu Q(\beta(\rho),x) & \; \frac{\delta Q}{\delta \mu}(\beta(\rho),x)
    \end{bmatrix}.
    \label{diff_beta}
    \end{equation}
\end{lemma}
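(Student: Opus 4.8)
The plan is to transfer the three defining ingredients of ``class $\mathcal C^1$'' from $Q$ to $\widehat Q$ one at a time, using only the linearity of $\beta$, the defining formula \eqref{beta}, and the Lipschitz estimate of Lemma~\ref{lem:betalip}. For the \emph{flat derivative}: since $\mathcal Y$ and $\mathcal X$ are convex and $\beta(\mathcal Y)\subset\mathcal X$, the incremental quotient defining $\frac{\delta\widehat Q}{\delta\rho}$ is legitimate, and, by linearity, $\beta(\rho+t(\rho'-\rho))=\mu+t(\mu'-\mu)$ with $\mu\doteq\beta(\rho)$, $\mu'\doteq\beta(\rho')\in\mathcal X$. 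Hence
\[
\lim_{t\to 0+}\frac{\widehat Q(\rho+t(\rho'-\rho))-\widehat Q(\rho)}{t}
=\lim_{t\to 0+}\frac{Q(\mu+t(\mu'-\mu))-Q(\mu)}{t}
=\int\frac{\delta Q}{\delta\mu}(\beta(\rho),x)\,\mathrm d(\beta(\rho')-\beta(\rho))(x)
\]
by \eqref{deriv} applied to $Q$. The function $x\mapsto\frac{\delta Q}{\delta\mu}(\beta(\rho),x)$ is continuous and $\beta(\rho),\beta(\rho')$ are compactly supported, so \eqref{beta} (after multiplying this function by a cutoff $\equiv 1$ on the supports involved, if one insists on test functions in $\mathcal C_0$) turns the last integral into $\int y\,\frac{\delta Q}{\delta\mu}(\beta(\rho),x)\,\mathrm d(\rho'-\rho)(x,y)$, i.e.\ the pairing of $\rho'-\rho$ with the claimed $\frac{\delta\widehat Q}{\delta\rho}(\rho,\bm x)=y\,\frac{\delta Q}{\delta\mu}(\beta(\rho),x)$.

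For the \emph{intrinsic derivative}: because $\frac{\delta Q}{\delta\mu}(\mu,\cdot)\in\mathcal C^1(\R^n)$, the map $\bm x=[x,y]^\T\mapsto y\,\frac{\delta Q}{\delta\mu}(\beta(\rho),x)$ is $\mathcal C^1$, with $x$-gradient $y\,\nabla_x\frac{\delta Q}{\delta\mu}(\beta(\rho),x)=y\,\nabla_\mu Q(\beta(\rho),x)$ and $y$-derivative $\frac{\delta Q}{\delta\mu}(\beta(\rho),x)$; concatenating these two components gives the stated expression for $\nabla_\rho\widehat Q$.

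It remains to verify that $\frac{\delta\widehat Q}{\delta\rho}$ and $\nabla_\rho\widehat Q$ are continuous, locally bounded, and locally Lipschitz on $\mathcal Y\times\R^{n+1}$. Continuity is immediate from the continuity of $\beta$ (Lemma~\ref{lem:betalip}), of $\frac{\delta Q}{\delta\mu}$, $\nabla_\mu Q$, and of the factor $y$. For the local estimates, fix a compact $\bm E\subset\R^{n+1}$, let $E$ be its projection onto the first $n$ coordinates, and set $b\doteq 1\vee\max\{y:(x,y)\in\bm E\}$: every $\rho\in\mathcal Y$ with $\spt\rho\subset\bm E$ satisfies $\spt\beta(\rho)\subset E$ and $\beta(\rho)(\R^n)=\int y\,\mathrm d\rho\le b$, so $\beta(\rho)$ ranges over a subset of $\mathcal X\cap\mathcal M(E)$ of uniformly bounded mass, and Lemma~\ref{lem:betalip} gives $\|\beta(\rho_1)-\beta(\rho_2)\|_K\le 2b\,W_2(\rho_1,\rho_2)$. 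Local boundedness of $\frac{\delta\widehat Q}{\delta\rho}$ and $\nabla_\rho\widehat Q$ then follows from that of $\frac{\delta Q}{\delta\mu}$ and $\nabla_\mu Q$ together with $|y|\le b$; local Lipschitz continuity follows by writing, e.g.,
\[
y_1\tfrac{\delta Q}{\delta\mu}(\beta(\rho_1),x_1)-y_2\tfrac{\delta Q}{\delta\mu}(\beta(\rho_2),x_2)
=y_1\big(\tfrac{\delta Q}{\delta\mu}(\beta(\rho_1),x_1)-\tfrac{\delta Q}{\delta\mu}(\beta(\rho_2),x_2)\big)+(y_1-y_2)\tfrac{\delta Q}{\delta\mu}(\beta(\rho_2),x_2),
\]
bounding the first summand by the local Lipschitz constant of $\frac{\delta Q}{\delta\mu}$ and $|x_1-x_2|+\|\beta(\rho_1)-\beta(\rho_2)\|_K\le|\bm x_1-\bm x_2|+2b\,W_2(\rho_1,\rho_2)$, and the second by local boundedness of $\frac{\delta Q}{\delta\mu}$; the same splitting handles $\nabla_\rho\widehat Q$. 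I expect no analytic difficulty beyond Lemma~\ref{lem:betalip}: the only point needing genuine care — and hence the main obstacle — is the bookkeeping that reconciles the two ``local'' structures under $\beta$, namely the inclusion $\beta(\mathcal Y\cap\mathcal M(\bm E))\subset\mathcal X\cap\mathcal M(E)$ with controlled total mass, and the absorption of the spurious factor $y$ without spoiling the constants.
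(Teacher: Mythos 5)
Your proposal is correct and follows essentially the same route as the paper: push the incremental quotient through the linear map $\beta$, apply the flat derivative of $Q$, use the defining formula of $\beta$ to reinstate the factor $y$, differentiate in $\bm x$ to get the intrinsic derivative, and verify the local bounds via Lemma~\ref{lem:betalip}. The only difference is that you spell out the local boundedness/Lipschitz bookkeeping (the inclusion $\beta(\mathcal Y\cap\mathcal M(\bm E))\subset\mathcal X\cap\mathcal M(E)$ and the splitting that absorbs the factor $y$), which the paper leaves as an ``easily verified'' remark.
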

\begin{proof}
It follows from $Q \in \mathcal{C}^1\left(\mathcal{X}\right)$ that
\begin{align*}
\lim_{t\to 0+}\frac{\widehat{Q}(\rho + t(\rho'-\rho)) - \widehat{Q}(\rho)}{t} 
    &= \lim_{t\to 0+}\frac{Q\big(\beta(\rho)+t(\beta(\rho')-\beta(\rho))\big) - Q(\beta(\rho))}{t} \\
    &= \int \frac{\delta Q}{\delta \mu}\left(\beta(\rho), x\right)\d \left(\beta(\rho')-\beta(\rho)\right)(x) \\
    &= \int y \frac{\delta Q}{\delta \mu}\left(\beta\left(\rho\right), x\right)\d \left(\rho'-\rho\right)(x,y).
\end{align*}
Therefore, $\frac{\delta \widehat{Q}}{\delta \rho}$ exists and is given by
\[
\frac{\delta \widehat{Q}}{\delta \rho}(\rho, \bm x) = y \frac{\delta Q}{\delta \mu}(\beta(\rho), x).
\]
As a consequence, the differentiability of $x \mapsto \frac{\delta Q}{\delta \mu}(\mu,x)$ implies the differentiability of $\bm x \mapsto \frac{\delta \widehat{Q}}{\delta \rho}(\rho, \bm x)$. One can easily compute that
\begin{equation*}
\nabla_{\rho} \widehat{Q}(\rho, \bm x) = \nabla_{\bm x} \frac{\delta \widehat{Q}}{\delta \rho}(\rho, \bm x) = 
\begin{bmatrix}
    y \nabla_\mu Q(\beta(\rho),x) &  \frac{\delta Q}{\delta \mu}(\beta(\rho),x)
\end{bmatrix}.
\end{equation*}
Now, as the representation formulas \eqref{diff_beta} have been established, we can easily verify that the derivatives $\frac{\delta \widehat{Q}}{\delta \rho}$ and $\nabla_\rho \widehat{Q}$ are locally Lipschitz (with the aid of Lemma~\ref{lem:betalip}) and locally bounded. Hence, we conclude that $\widehat{Q} \in \mathcal{C}^1({\mathcal{Y}})$.
\end{proof}

\subsection{Problem Transformation}

Let the data $T$, $\ell$, $F$, $G$, and $\mathcal{U}$ be as in the statement of $(P)$. 
Consider the following optimal control problem in the metric space $({\mathcal{Y}}, W_2)$ of probability measures:
\[
(\bm{P}) \qquad 
\widehat{\ell}({\rho}_T) \doteq \ell(\beta({\rho}_T)) \to \inf, \quad u \in \mathcal{U},
\]
subject to the nonlocal continuity equation \eqref{eq:contest} on ${\mathcal{Y}}$.

Due to Theorem~\ref{murhopro}, $(\bm{P})$ is equivalent to the original problem $(P)$, meaning that both problems share the same optimal controls (if they exist). The optimal trajectories $\mu$ of $(P)$ are recovered from the optimal states ${\rho}$ in $(\bm{P})$ via the barycentric projection mapping $\beta$.

\subsection{Pontryagin's Maximum Principle}

This section presents the main result of this paper: a version of Pontryagin's Maximum Principle (PMP) for the problem $(P)$. The result will be presented in two equivalent forms.

The first version is derived by translating the necessary optimality condition for the reduced problem $(\bm{P})$, as recalled from \cite{bonnetPontryaginMaximumPrinciple2019,bonnetNecessaryOptimalityConditions2021}. This follows the tradition of control theory on manifolds and involves a Hamiltonian equation in the cotangent bundle $\mathbb{CB} \doteq (\R^{n} \times \R^+) \times (\R^{n} \times \R^{+})^* \simeq (\R^{n} \times \R^+)^2$ of the corresponding base space.

The second version of the PMP mimics its conventional formulation \cite{Pontryagin1962}. It involves decomposing the Hamiltonian equation into the corresponding primal and adjoint components, similar to \cite{chertovskihOptimalControlNonlocal2023}. In this case, the adjoint component is represented by the barycentric projection of the Hamiltonian measure onto the subspace of co-vectors.

\begin{proposition}[Pontryagin's Maximum Principle for problem $(\bm {P})$ \cite{bonnetPontryaginMaximumPrinciple2019,bonnetNecessaryOptimalityConditions2021}]
    Let the hypotheses \emph{\ref{as-2}} hold together with \emph{\ref{assumption-1}}, and let $t \mapsto (u_t, \rho_t \doteq \rho_t[u])$ be an optimal process for the problem $(\bm P)$. 
    Then, the following condition is satisfied for a.a. $t \in I$:
\[
    \bm H\left(u_t, \gamma_t\right)=\max_{u\in U}\bm H(u, \gamma_t).
\]
Here, 
\begin{align*}
\bm{H}_t(u,\gamma) = \int \bm p \cdot \bm F_t(u, \pi^1_\sharp \gamma, \bm x)\, \d \gamma(\bm{x},\bm{p})
\end{align*}
is a lifted Hamiltonian, and $\gamma \in \mathcal{C}\left(I; \mathcal{P}_c\left(\mathbb{CB}\right)\right)$ is a solution to the Hamiltonian system:
\begin{equation}
\begin{cases}
  \partial_t \gamma_t + \nabla_{(\bm x, \bm p)} \cdot \left( \mathbb{J}_{2(n+1)}\, D_{\gamma} \bm H_t\left(u_t, \gamma_t\right) \, \gamma_t \right) = 0, \\
  \pi^1_\sharp \gamma_t = \rho_t, \quad \text{for all } t \in I, \\
  \pi^2_\sharp \gamma_T = (-\nabla_{\rho} \widehat{\ell}(\rho_T))_\sharp \, \rho_T,
\end{cases}
\label{eq:hamil1}
\end{equation}
where $\bm p \doteq [p, q]$ is the co-vector of $\bm x$; $\pi^{1,2} : \R^{2(n+1)} \to \R^{n+1}$, with $\pi^1(\bm x, \bm p) \doteq \bm x$ and $\pi^2(\bm x, \bm p) \doteq \bm p$, are the projection maps onto the corresponding subspaces, and $\mathbb{J}_{2(n+1)}$ is the symplectic matrix
\[
\mathbb{J}_{2(n+1)} = \begin{pmatrix}
    0 & \id \\
    -\id & 0
\end{pmatrix}
\]
of dimension $2(n+1)$.
\end{proposition}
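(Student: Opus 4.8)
The plan is to obtain the PMP for $(\bm P)$ as a direct consequence of the abstract maximum principle for mean-field optimal control problems with a nonlocal continuity-equation constraint, as established in~\cite{bonnetPontryaginMaximumPrinciple2019,bonnetNecessaryOptimalityConditions2021}, after checking that our reduced problem fits the hypotheses of those references. The first step is therefore to verify the required regularity of the data of $(\bm P)$: the dynamics $\bm F_t(u,\rho,\bm x)$ must be continuous in $(t,u)$, locally Lipschitz and with sublinear growth in $(\rho,\bm x)$, and continuously differentiable in $\bm x$ and in $\rho$ (in the flat/intrinsic sense of Definition~\ref{def:diff}), with the derivatives enjoying the corresponding local Lipschitz and local boundedness properties. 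All of this is inherited from Assumptions~\ref{assumption-1} and~\ref{as-2} on $F,G$ together with Lemma~\ref{lem:betalip} (the $2b$-Lipschitz continuity of $\beta$ on sets of uniformly bounded mass) and the chain-rule computations of Lemma~\ref{lem:qdiff}: since $\bm F_t(u,\rho,\bm x)=[F_t(u,\beta(\rho),x),\,yG_t(u,\beta(\rho),x)]^\T$, its $\bm x$-derivative and its intrinsic $\rho$-derivative are explicit combinations of $D_xF,\nabla_xG,D_\mu F,\nabla_\mu G$, $F$, $G$ composed with $\beta$, each of which is controlled on bounded-mass sets. One must also record that, because the $y$-component of $\bm F$ is $yG$ and $|G|\le C$, the mass coordinate stays in a compact interval $[e^{-CT},e^{CT}]$ along trajectories, so all the "local" hypotheses apply uniformly; this is what legitimises working on $\mathcal Y=\mathcal P_c(\R^n\times\R^+)$ rather than on a space of probability measures on all of $\R^{n+1}$. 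Finally, $\widehat\ell=\ell\circ\beta$ is of class $\mathcal C^1(\mathcal Y)$ by Lemma~\ref{lem:qdiff} whenever $\ell\in\mathcal C^1(\mathcal X)$, which supplies the transversality datum $\nabla_\rho\widehat\ell(\rho_T)$.

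The second step is to quote the abstract PMP verbatim for these data. In the formulation of~\cite{bonnetPontryaginMaximumPrinciple2019,bonnetNecessaryOptimalityConditions2021}, an optimal pair $(u,\rho)$ is accompanied by a curve $\gamma\in\mathcal C(I;\mathcal P_c(\mathbb{CB}))$ solving the Hamiltonian continuity equation driven by the symplectic gradient of the lifted Hamiltonian $\bm H_t(u,\gamma)=\int \bm p\cdot\bm F_t(u,\pi^1_\sharp\gamma,\bm x)\,\d\gamma$, with the constraint $\pi^1_\sharp\gamma_t=\rho_t$ for every $t$ and the terminal condition $\pi^2_\sharp\gamma_T=(-\nabla_\rho\widehat\ell(\rho_T))_\sharp\rho_T$, and the maximality relation $\bm H_t(u_t,\gamma_t)=\max_{u\in U}\bm H_t(u,\gamma_t)$ holding for a.e.\ $t$. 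Here the only points that need a line of justification are: (i) the well-posedness of the Hamiltonian system, which follows from the same regularity as above applied now to the lifted vector field $\mathbb J_{2(n+1)}D_\gamma\bm H_t$ on the doubled space $\R^{2(n+1)}$, together with the observation that the $\pi^1_\sharp\gamma_t=\rho_t$ constraint confines the $\bm x$-marginal to a fixed compact set; (ii) the non-triviality / normalisation of the multiplier, which is automatic in this setting because the cost is terminal and differentiable, so the abnormal multiplier can be normalised to $1$ and absorbed into $\nabla_\rho\widehat\ell$; (iii) measurability of $t\mapsto\gamma_t$ and of the maximised Hamiltonian, which is standard since $\bm F$ is Carathéodory and $U$ is compact.

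The main (and essentially the only) obstacle is the verification in step one that our specific $\bm F$ and $\widehat\ell$ meet, \emph{exactly}, the technical standing assumptions of the chosen reference — in particular the local Lipschitz dependence of the intrinsic derivative $D_\rho\bm F$ on $\rho$ uniformly in $(t,u)$, and the sublinear growth of $\bm F$ in $\bm x$ over the cone $\mathcal Y$. The growth in $x$ is controlled by Assumption~\ref{assumption-1} ($|F|\le C(1+\mu(\R^n))=2C$ on $\mathcal Y$ since $\beta(\rho)$ is a probability measure, and $|yG|\le Cy$ with $y$ bounded), and the Lipschitz-in-$\rho$ estimates for the derivatives reduce, via Lemma~\ref{lem:betalip}, to the corresponding hypotheses in Assumption~\ref{as-2}, with the extra factors of $y$ harmless on the compact mass range. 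Once these checks are assembled, the proof is complete: the statement is literally the instantiation of~\cite[the PMP of][]{bonnetPontryaginMaximumPrinciple2019,bonnetNecessaryOptimalityConditions2021} at the data $(\bm F,\widehat\ell,U)$ of the reduced problem $(\bm P)$.
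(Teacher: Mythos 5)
Your proposal is correct and matches the paper's approach: the paper offers no proof of this Proposition at all, presenting it as a direct import from \cite{bonnetPontryaginMaximumPrinciple2019,bonnetNecessaryOptimalityConditions2021} after noting (via Lemma~\ref{lem:betalip} and the structure of $\bm F$) that the reduced dynamics satisfy the standard regularity hypotheses, which is exactly the verification you carry out in more detail. One small inaccuracy worth fixing: $\beta(\rho)$ is generally \emph{not} a probability measure (its mass is $\int y\,\d\rho$), but along trajectories this mass is confined to $[e^{-CT},e^{CT}]$, so your sublinearity bound survives with an adjusted constant.
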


\begin{remark}
    The term ${\mathbb J}_{2(n+1)} D_{\gamma} \bm H\left(u, \gamma\right)$ can be explicitly computed as follows:
\[
{\mathbb J}_{2(n+1)} D_{\gamma} \bm H_t\left(u, \gamma, \bm x, \bm p\right) = \begin{pmatrix}
    \bm F_t(u, \pi^1_\sharp \gamma, \bm x) \\
    -\bm p \, \displaystyle D_{\bm x} \bm F_t(u, \pi^1_\sharp \gamma, \bm x) - \int \bm p' D_{\rho} \bm F_t(u,  \pi^1_\sharp \gamma, \bm x, \bm x') \, \d \gamma(\bm x', \bm p')
\end{pmatrix}.
\]
\end{remark}

We now translate this result to the terms of the original problem $(P)$ using the relation \eqref{beta-mu}.  
\begin{theorem}[PMP for problem $({P})$: Version I]\label{pmp}
    Let hypotheses \emph{\ref{assumption-1}} and \emph{\ref{as-2}} hold, and $t \mapsto (u_t, \mu_{t} = \mu_t[u])$ be an optimal process for the problem $(P)$. Then, the following condition holds at a.e. $t \in I$:
\[
    H\left(u_t, \gamma_t\right)=\max_{u\in U}H(u, \gamma_t),
\]
where
\begin{align}
 H_t(u,\gamma) \doteq \int p \cdot F_t\left(u, \beta\left(\pi^1_\sharp \gamma\right),  x\right)\d \gamma(x, y, p, q)+ \int q \, y \, G_t\left(u, \beta\left(\pi^1_\sharp \gamma\right), x\right)\d \gamma(x, y, p, q),\label{pf}
\end{align}
and $\gamma\in \mathcal C\left(I;\mathcal P_c\big(\mathbb C\mathbb B\big)\right)$ satisfies the following system
\begin{equation}
\begin{cases}
  \partial_t\gamma_t + \nabla_{(x, y, p, q)}\cdot\left(\vec H_t\left(u_t, \gamma_t\right) \, \gamma_t\right)=0,\\
    \pi^1_\sharp\gamma_0=\vartheta\otimes \delta_1,\quad \text{ for all }\quad t \in I,\\
  \pi^2_\sharp\gamma_T = \left([x \ y] \mapsto -\begin{bmatrix}
    y\, {\nabla_{\mu}}\ell(\mu_T,x) &  \frac{\delta \ell}{\delta \mu}(\mu_T,x)
\end{bmatrix}\right)_\sharp \pi^1_\sharp\gamma_T,
\end{cases}\label{eq:hamil}
\end{equation}
and the function $\vec H_t=[\vec H_t^1, \ldots, \vec H_t^4]$ is defined as:
\begin{align*}
\vec H_t^1(u, \gamma, x) \doteq & \  F_t(\beta(\pi^1_\sharp\gamma), x),
\\
\vec H_t^2(u, \gamma, x, y) \doteq & \ y \, G_t(\beta(\pi^1_\sharp\gamma), x)
\\
   \vec H_t^3(u, \gamma, x, y) \doteq &   - p\, D_xF_t(\beta(\pi^1_\sharp\gamma), x) 
   - \displaystyle\int y'p'\, D_{\mu}F_t(\beta(\pi^1_\sharp\gamma),x,x')\d \gamma(x',y',p',q')\\
   \ &- q  y\, \nabla_xG_t(u, \beta(\pi^1_\sharp\gamma),x)
     - \displaystyle\int y'q'y  \, \nabla_{\mu}G_t(\beta(\pi^1_\sharp\gamma), {x}, x')\d \gamma(x',y',p',q')\\
  \vec H_t^4(u,\gamma, x, y)  \doteq  &  - q \, G_t\left(\beta(\pi^1_\sharp\gamma),x\right) - \displaystyle\int p'\cdot \frac{\delta F_t}{\delta \mu} (u,\beta(\pi^1_\sharp),x, x') d \gamma(x',y',p',q')\\
  \ & - \displaystyle\int q'y \, \frac{\delta G_t}{\delta \mu}(u, \beta(\pi^1_\sharp\gamma_t), x, x')\d \gamma(x',y',p',q'),
\end{align*}
Moreover,
\[
  \beta(\pi^1_\sharp\gamma_t)=\mu_t,\quad \text{ for all }\quad t \in I.
\]
\end{theorem}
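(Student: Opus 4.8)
The plan is to \emph{deduce} Theorem~\ref{pmp} from the Pontryagin Maximum Principle for the reduced problem $(\bm P)$ recorded above, by re-expressing every ingredient of the lifted Hamiltonian system \eqref{eq:hamil1} in the split coordinates $\bm x=[x,y]^\T$, $\bm p=[p,q]$ and then eliminating the barycentric variable through the identity $\mu_t=\beta(\rho_t)$ of Theorem~\ref{murhopro}. First I would check that $(\bm P)$ is admissible for that statement: by Lemma~\ref{lem:qdiff} the cost $\widehat\ell=\ell\circ\beta$ lies in $\mathcal C^1(\mathcal Y)$, and, combining Assumptions \ref{assumption-1}, \ref{as-2} with the local Lipschitz continuity of $\beta$ (Lemma~\ref{lem:betalip}) and the chain rule of Lemma~\ref{lem:qdiff} applied componentwise to $\bm F$, the field $\bm F$ is sublinear, locally Lipschitz and of class $\mathcal C^1$ in $(\rho,\bm x)$ with all derivatives locally bounded and locally Lipschitz -- here ``local'' in the sense of Definition~\ref{def:local}, which suffices because along any admissible trajectory the $y$-marginal of $\rho_t$ is confined to the fixed compact interval $[e^{-CT},e^{CT}]$, thanks to $|G|\le C$ and the explicit form of $Y_t$ in \eqref{chrs}. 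Hence the cited PMP applies to $(\bm P)$ and produces $\gamma\in\mathcal C(I;\mathcal P_c(\mathbb{CB}))$ with $\pi^1_\sharp\gamma_t=\rho_t$ for all $t$.

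Next I would translate the maximum condition. Writing $\bm p\cdot\bm F_t(u,\rho,\bm x)=p\cdot F_t(u,\beta(\rho),x)+q\,y\,G_t(u,\beta(\rho),x)$ and using $\beta(\pi^1_\sharp\gamma_t)=\beta(\rho_t)=\mu_t$, the lifted Hamiltonian $\bm H_t$ coincides termwise with $H_t$ in \eqref{pf}; hence $\bm H_t(u,\gamma_t)=H_t(u,\gamma_t)$ for every $u\in U$, and the maximality statement of the proposition becomes, verbatim, the one asserted in Theorem~\ref{pmp}.

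For the Hamiltonian flow I would start from the explicit form of $\mathbb J_{2(n+1)}D_\gamma\bm H_t$ given in the Remark following \eqref{eq:hamil1} and expand its two $(n+1)$-blocks. The $\bm x$-block is simply $\bm F_t(u,\mu_t,\bm x)=[\,F_t(u,\mu_t,x);\ y\,G_t(u,\mu_t,x)\,]$, which is $[\vec H_t^1;\vec H_t^2]$. For the $\bm p$-block one needs, on one hand, the classical derivative $D_{\bm x}\bm F_t$ -- the $(n+1)\times(n+1)$ matrix with blocks $D_xF_t$, $0$, $y\nabla_xG_t$, $G_t$ -- and, on the other, the intrinsic derivative $D_\rho\bm F_t(u,\rho,\bm x,\bm x')$, which by the chain rule through $\beta$ (Lemma~\ref{lem:qdiff}, applied separately to the components $F_t(u,\beta(\rho),x)$ and $y\,G_t(u,\beta(\rho),x)$) equals the $(n+1)\times(n+1)$ matrix with blocks $y'D_\mu F_t$, $\tfrac{\delta F_t}{\delta\mu}$, $y\,y'\nabla_\mu G_t$, $y\,\tfrac{\delta G_t}{\delta\mu}$, all evaluated at $(\beta(\rho),x,x')$. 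Forming the products $\bm p\,D_{\bm x}\bm F_t$ and $\int\bm p'\,D_\rho\bm F_t\,\d\gamma(\bm x',\bm p')$ and separating the resulting row into its $p$- and $q$-parts reproduces $\vec H_t^3$ and $\vec H_t^4$ exactly. Finally, for the boundary data: pushing \eqref{eq:hamil1} forward by $\pi^1$ shows that $\pi^1_\sharp\gamma_t$ solves \eqref{eq:contest}, whence $\pi^1_\sharp\gamma_t=\rho_t$; in particular $\pi^1_\sharp\gamma_0=\vartheta\otimes\delta_1$ and $\beta(\pi^1_\sharp\gamma_t)=\beta(\rho_t)=\mu_t$ by Theorem~\ref{murhopro}; and by \eqref{diff_beta} with $Q=\ell$ and $\mu_T=\beta(\rho_T)$ one has $\nabla_\rho\widehat\ell(\rho_T,x,y)=[\,y\,\nabla_\mu\ell(\mu_T,x)\ \ \tfrac{\delta\ell}{\delta\mu}(\mu_T,x)\,]$, so the transversality condition $\pi^2_\sharp\gamma_T=(-\nabla_\rho\widehat\ell(\rho_T))_\sharp\rho_T$ turns into the terminal condition in \eqref{eq:hamil}.

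I expect the main obstacle to be this middle step: correctly lifting the flat and intrinsic derivatives of the \emph{vector-valued} field $\bm F$ through $\beta$ -- keeping the matrix dimensions and the placement of the weights $y$, $y'$ straight -- together with the preliminary verification that the $y$-component of $\bm F$, which is not globally bounded, does not take $(\bm P)$ outside the hypotheses of the cited PMP; this is precisely why the a priori confinement of $y$ to a compact interval, and the ``local'' formulation of all regularity requirements, are used.
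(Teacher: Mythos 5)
Your proposal is correct and follows essentially the same route as the paper: both reduce Theorem~\ref{pmp} to the cited PMP for $(\bm P)$ by noting $\bm H_t=H_t$, computing $D_{\bm x}\bm F_t$ and $D_\rho\bm F_t$ in the split coordinates $(x,y,p,q)$ via the chain rule through $\beta$ (yielding exactly the block matrices you describe), and translating the transversality condition with \eqref{diff_beta}. Your added verification that $(\bm P)$ satisfies the hypotheses of the cited result --- in particular the a priori confinement of the $y$-marginal to a compact interval, which makes the merely local regularity of $\bm F$ sufficient --- is a point the paper's proof leaves implicit, but it does not change the argument.
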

\begin{proof}
We start by observing that    
$
\bm{H}_t(\cdot,\cdot) = H_t(\cdot,\cdot),
$
which follows directly from the definition of $\bm{F}$. Thus, the result reduces to showing that the solution of \eqref{eq:hamil} coincides with the solution of \eqref{eq:hamil1}. 

The existence and uniqueness of solutions are established by \cite{MR4097258}. What remains is merely to demonstrate that the conditions of \eqref{eq:hamil} and  \eqref{eq:hamil1} are identical. To this end, we rewrite all derivatives in \eqref{eq:hamil1} using the projected variable $\mu \doteq \beta(\rho)$. First, applying \eqref{diff_beta}, we find:
\[
\nabla_{\rho} \widehat{\ell}(\rho) = 
\begin{bmatrix}
y \, \nabla_\mu \ell(\beta(\rho), x) & \frac{\delta \ell}{\delta \mu} (\beta(\rho), x)
\end{bmatrix}
\doteq
\begin{bmatrix}
y \, \nabla_\mu \ell(\mu, x) & \frac{\delta \ell}{\delta \mu} (\mu, x)
\end{bmatrix}.
\]

Next, we compute:
\[
D_{\bm x} \bm{F}_t(u, \rho, x, y) = 
D_{(x, y)} \begin{bmatrix}
F_t(u, \beta(\rho), x) \\ 
y \, G_t(u, \beta(\rho), x)
\end{bmatrix} = 
\begin{bmatrix}
D_x F_t(u, \beta(\rho),x) & 0_n \\ 
y \nabla_x G_t(u, \beta(\rho), x) & G_t(u, \beta(\rho), u, x)
\end{bmatrix},
\]
which implies:
\[
\bm{p} \, D_{\bm{x}} \bm{F}_t(u, \rho, \bm{x}) = 
\begin{bmatrix}
p \cdot D_x F_t(u, \beta(\rho), x) + q y\, D_x G_t(u, \beta(\rho), x) &\hspace{5pt} 
q \, G_t\left(u, \beta(\rho), x\right)
\end{bmatrix}.
\]

Introducing:
\[
\mathcal{F}_t(u, \mu, \bm{x}) = 
\begin{bmatrix}
F_t(u, \mu, x) \\ 
y\, G_t(u, \mu, x)
\end{bmatrix},
\]
we compute:
\[
\frac{\delta \mathcal{F}_t(u, \mu, \bm{x}, \bm x')}{\delta \mu} = 
\begin{bmatrix}
\frac{\delta F_t}{\delta \mu} (u, \mu, x, x') \\[0.1cm] 
y \, \frac{\delta G_t}{\delta \mu} (u, \mu, x, x')
\end{bmatrix} \qquad \Rightarrow \ \ \frac{\delta \bm{F}_t(u, \rho, \bm{x}, \bm{x}')}{\delta \rho} = y'\frac{\delta \mathcal{F}_t(u, \beta(\rho), \bm{x}, \bm x')}{\delta \mu},
\]
and, consequently,
\begin{equation}\label{diff}
D_{\rho} \bm{F}_t(u, \rho, \bm{x}, \bm{x}') = 
\begin{bmatrix}
y'\, D_\mu F_t (u, \beta(\rho), x, x') & \frac{\delta F_t}{\delta \mu} (u, \beta(\rho), x, x') \vspace{5pt}\\
y'y \, \nabla_\mu G_t(u, \beta(\rho), x, x') & y \, \frac{\delta G_t}{\delta \mu} (u, \beta(\rho), x, x')
\end{bmatrix}.
\end{equation}

Substituting these into the representation and recalling that $\mu=\beta(\rho) = \beta(\pi^1_\sharp\gamma)$, we obtain:
\begin{multline*}
\bm{p}' D_{\rho} \bm{F}_t(u, \rho, \bm{x}, \bm{x}') \\= 
\begin{bmatrix}
y'\, p'\, D_\mu F_t(u, \mu, x, x') + y'\, q'\, y \, \nabla_\mu G_t(u, \mu,x, x') &\hspace{8pt} 
p'\cdot \frac{\delta F_t}{\delta \mu} (u,\mu,x, x') +~q' \, y \; \frac{\delta G_t}{\delta \mu} (u,\mu,x, x')
\end{bmatrix}.
\end{multline*}

Putting everything together, we confirm that the systems \eqref{eq:hamil} and \eqref{eq:hamil1} are indeed equivalent reformulations, completing the proof.
\end{proof}

We now pass to another (equivalent) representation of the same necessary optimality condition.

Introduce a vector measure $\nu = [\nu_i : 1 \leq i \leq n+1]$, $\nu_i \in {\mathcal Y}$, via the action:
\[
\langle \nu_i, \bm{\phi}\rangle \doteq \int \bm{p}_i \, \bm{\phi}(\bm{x}) \, \mathrm{d}\gamma(\bm{x}, \bm{p}), \quad \bm{\phi} \in \mathcal{C}^1(\mathbb{R}^n \times \mathbb{R}^{+}).
\]
In \cite{chertovskihOptimalControlNonlocal2023}, it is shown that the map $t \mapsto \nu_t$ is a unique distributional solution to the following system of balance laws:
\begin{align}
  \label{eq:adjoint1}
  \partial_{t}\nu_i + \nabla_{\bm{x}}\cdot(\bm{f} \nu_i) &= \sum_{j=1}^{n+1} \left\{\left(\int [\bm{m}_t(\bm{x}', \cdot)]^j_i \, \mathrm{d}\nu_j(\bm{x}')\right)\rho - \partial_{\bm{x}^i}\bm{f}^j \, \nu_j\right\}, \quad 1\leq i\leq n+1,
\end{align}
where the dependence on $t$ is omitted for brevity, and the following abbreviations are used:
\[
\bm{f}_t(\bm{x}) \doteq \bm{F}_t\left(u_t, \rho_t, \bm{x}\right), \quad \bm{m}_t(\bm{x}', \bm{x}) \doteq D_{\rho} \bm{F}_{t}\left(u_t, \rho_t, \bm{x}', \bm{x}\right).
\]

Moreover, the measure $\nu_T$ is absolutely continuous w.r.t. $\rho_T$, with the Radon-Nikodym derivative:
\begin{equation}
  \label{eq:adj_term}
  -\frac{\mathrm{d}\nu_T}{\mathrm{d}\rho_T}(x, y) = \nabla_{\rho}\widehat{\ell}(\rho_T, x, y) \doteq \begin{bmatrix}
    y \, \nabla_{\mu}\ell(\mu_T, x) &  \frac{\delta \ell}{\delta \mu}(\mu_T, x)
  \end{bmatrix}.
\end{equation}

Let us denote:
\[
{\psi_i} \doteq ([x \ y] \to x)_\sharp \nu_i, \quad 1\leq i \leq n, \quad \text{and} \quad \xi \doteq \beta(\nu_{n+1}),
\]
and observe that the Hamiltonian \eqref{pf} can be expressed in terms of $\psi_i$ and $\xi$. Indeed, the first term reads:
\[
\int p \cdot F\left(u, \beta\left(\pi^1_\sharp \gamma\right), x\right) \, \mathrm{d}\gamma(x, y, p, q) \doteq \sum_{i=1}^n\int [F(u, \mu, x)]^i \, \mathrm{d}\nu_i(x, y) \doteq \sum_{i=1}^n \left\langle \psi_i, F^i\right\rangle,
\]
and the second term is restated as
\[
\int q \, y \, G\left(u, \beta\left(\pi^1_\sharp \gamma\right), x\right) \, \mathrm{d}\gamma(x, y, p, q) \doteq \int G \left(u, \mu, x\right) \, \mathrm{d}\beta(\nu_{n+1})(x) \doteq \big\langle \xi, G\big\rangle.
\]

The terminal condition \eqref{eq:adj_term} can also be rewritten in terms of $(\mu, \psi, \xi)$:
\begin{equation}\label{psiT}
  - \frac{\mathrm{d} (\psi_i)_{T}}{\mathrm{d} \mu_T} = [\nabla_\mu \ell(\mu_T)]_i,  
\end{equation}
and, analogously:
\begin{equation}\label{xiT}
  - \frac{\mathrm{d} \xi_{T}}{\mathrm{d} \mu_T} = \frac{\delta \ell}{\delta \mu}(\mu_T).
\end{equation}

Rewriting the $i$th balance law in \eqref{eq:adjoint1}, for $1 \leq i \leq n$, in the distributional form with the restricted class of test functions of the form $\bm{\phi}(x, y) = \phi(x)$, we get:

\begin{align*}
\frac{\d}{\d t} \left\langle \psi_i, \phi \right\rangle  \doteq \frac{\d}{\d t} \left\langle \nu_i, \bm{\phi} \right\rangle 
&= \int \nabla_x \phi(x) \cdot F(u, \mu, x) \d \nu_i\\ 
&\quad + \sum_{j=1}^n \int d \nu_j(x', y') \int y \, \phi(x) \, [D_\mu F(u, \mu, x', x)]^j_i  \d \rho(x, y)\\
&\quad + \int d \nu_{n+1}(x', y') \, y' \int y \, \phi(x) \, \nabla_\mu G_i(u, \mu, x', x) \d \rho(x, y)\\
&\quad - \sum_{j=1}^n \int \phi(x) \, [D_x F(u, \mu, x)]^j_i \d \nu_j(x, y)\\
&\quad - \int y \, \phi(x) \, \nabla_x G_i(u, \mu, x) \d \nu_{n+1}(x, y)\\
&\doteq \int \nabla_x \phi(x) \cdot F(u, \mu, x) \d \psi_i(x)\\ 
&\quad + \sum_{j=1}^n \int \d \psi_j(x') \int \phi(x) \, [D_\mu F(u, \mu, x', x)]^j_i \d \mu(x)\\
&\quad + \int \d \xi(x') \int \phi(x) \, \nabla_\mu G_i(u, \mu, x', x) \d \mu(x)\\
&\quad - \sum_{j=1}^n \int \phi(x) \, [D_x F(u, \mu, x)]^j_i \d \psi_j(x)\\
&\quad - \int \phi(x) \, \nabla_x G_i(u, \mu, x) \d \xi(x).
\end{align*}
The result is the distributional form of the following PDE:
\begin{align}
\partial_t \psi_i + \nabla_{x} \cdot (F(u, \mu) \psi_i) 
&= \sum_{j=1}^n \left\{ \left( \int \left[D_\mu F(u, \mu, x', \cdot)\right]^j_i \d \psi_j(x') \right) \mu - [D_x F(u, \mu)]^j_i \psi_j \right\} \notag\\
\label{psj}
&\quad + \left( \int \nabla_\mu G_i(u, \mu, x', \cdot) \d \xi(x') \right) \mu - \nabla_x G_i(u, \mu) \, \xi.
\end{align}

Similarly, using test functions of the form $\bm{\phi}(x, y) = y \, \phi(x)$ in the representation of a distributional solution to the last equation in \eqref{eq:adjoint1}, we derive:
\begin{align*}
\frac{\d}{\d t} \left\langle \xi, \phi \right\rangle & \doteq\frac{\d}{\d t} \left\langle \nu_{n+1}, \bm{\phi} \right\rangle\\
& \quad = \int y\, \nabla_x\phi(x) \cdot F(u, \mu, x) \, \d \nu_{n+1}(x, y)\\
& \quad + \int \phi(x) \, y \, G(u, \mu, x) \, \d \nu_{n+1}(x, y)\\
&\quad + \sum_{j=1}^n \int \d \nu_j(x', y') \int y \, \phi(x) \, \left[ \frac{\delta F(u, \mu, x', x)}{\delta \mu} \right]^j \d \rho(x, y)\\
&\quad + \int \d \nu_{n+1}(x', y') \, y' \int y \, \phi(x) \, \frac{\delta G(u, \mu, x', x)}{\delta \mu} \d \rho(x, y)\\
&\quad - \int y \, \phi(x) \, G(u, \mu, x) \d \nu_{n+1}(x, y)\\
&\doteq \int \nabla_x\phi(x) \cdot F(u, \mu, x) \, \d \xi(x)\\
&\quad + \sum_{j=1}^n \int \d \psi_j(x') \int \phi(x) \, \left[ \frac{\delta F(u, \mu, x', x)}{\delta \mu} \right]^j \d \mu(x)\\
&\quad + \int \d \xi(x') \int \phi(x) \, \frac{\delta G(u, \mu, x', x)}{\delta \mu} \d \mu(x),
\end{align*}
with the strong form:
\begin{equation}
\partial_t \xi + \nabla_x\cdot(F(u, \mu)\, \xi) = \left\{ \sum_{j=1}^n \int \left[ \frac{\delta F(u, \mu, x', \cdot)}{\delta \mu} \right]^j d \psi_j(x') + \int \frac{\delta G(u, \mu, x', \cdot)}{\delta \mu} \d \xi(x') \right\} \mu.
\label{xi}
\end{equation}

The backward system~\eqref{psiT}--\eqref{xi} consisting of linear nonlocal balance laws is referred to as the \emph{adjoint} of \eqref{eq:u-blaw}; its well-posedness is established similarly to \cite[Section 4.2]{chertovskihOptimalControlNonlocal2023}. 

Finally, we arrive at the following result:
\begin{theorem}[PMP for problem $({P})$: Version II]
\label{vII}
Let the process $t\mapsto (u_t, \mu_t)$ satisfy all assumptions of Theorem~\ref{pmp}. Then, the following pointwise condition holds at a.a. $t \in I$: 
\[
    \mathcal{H}(u_t, \psi_t, \xi_t) = \max_{u \in U} \mathcal{H}(u, \psi_t, \xi_t),
\]
where
\[
    \mathcal{H}_t(u, \psi, \xi) \doteq \sum_{i=1}^{n} \left\langle \psi_i, F^i_t(u, \mu) \right\rangle + \left\langle \xi, G_t(u, \mu) \right\rangle,
\]
and $t \mapsto ((\psi_1)_t, \ldots, (\psi_n)_t, \xi_t)$ is a backward solution to the adjoint system~\eqref{psiT}--\eqref{xi}.
\end{theorem}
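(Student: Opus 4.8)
The plan is to deduce Version~II directly from Version~I (Theorem~\ref{pmp}) by a change of variables in the Hamiltonian, exploiting the correspondence between the cotangent-bundle measure $\gamma$ and the pair $(\psi,\xi)$ encoded through the vector measure $\nu$. Nothing genuinely new about optimality has to be proved: the maximum condition is inherited from Theorem~\ref{pmp}; what remains is (i) to check that $H_t(u,\gamma_t)=\mathcal H_t(u,\psi_t,\xi_t)$ for a.e.\ $t$, and (ii) to verify that the curve $t\mapsto(\psi_t,\xi_t)$ obtained by projecting $\nu_t$ is exactly the (unique) backward solution of the adjoint system \eqref{psiT}--\eqref{xi}. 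Most of the bookkeeping is already displayed in the lead-up to the statement, so the proof mainly collects these pieces.

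First I would recall from \cite{chertovskihOptimalControlNonlocal2023} that the co-vector component $\pi^2_\sharp\gamma_t$ of the Hamiltonian flow of Theorem~\ref{pmp} is represented by the vector measure $\nu_t$, which solves the linear nonlocal balance system \eqref{eq:adjoint1} with the Radon--Nikodym terminal datum \eqref{eq:adj_term}; here one uses the block structure of $\bm F$ together with the explicit form \eqref{diff} of $D_\rho\bm F$ and the identity $\beta(\pi^1_\sharp\gamma_t)=\mu_t$ from Theorem~\ref{pmp}. Compact support of $\gamma_t$ for every $t$, guaranteed by \cite{MR4097258}, makes all the moments in $y,p,q$ that appear below finite, so every integral is well defined. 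Then, since $\psi_i=([x\ y]\mapsto x)_\sharp\nu_i$ and $\xi=\beta(\nu_{n+1})$, substituting the definition of $\nu_i$ into \eqref{pf} and replacing $\beta(\pi^1_\sharp\gamma_t)$ by $\mu_t$ gives $\int p\cdot F_t(u,\mu_t,x)\,\d\gamma_t=\sum_{i=1}^n\langle\psi_{i,t},F_t^i(u,\mu_t)\rangle$ and $\int q\,y\,G_t(u,\mu_t,x)\,\d\gamma_t=\langle\beta(\nu_{n+1,t}),G_t(u,\mu_t)\rangle=\langle\xi_t,G_t(u,\mu_t)\rangle$, whence $H_t(u,\gamma_t)=\mathcal H_t(u,\psi_t,\xi_t)$ for a.e.\ $t$; applying Theorem~\ref{pmp} then yields the maximum condition in the asserted form.

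Next I would identify the adjoint system. Restricting the distributional form of the $i$-th equation of \eqref{eq:adjoint1} to test functions $\bm\phi(x,y)=\phi(x)$ for $1\le i\le n$, and to $\bm\phi(x,y)=y\,\phi(x)$ for $i=n+1$, inserting \eqref{diff} and pushing forward through $[x\ y]\mapsto x$ (respectively applying $\beta$), one recognizes — after regrouping the barycentric integrals over the $y,p,q$ variables — precisely the weak formulations of \eqref{psj} and \eqref{xi}, while \eqref{eq:adj_term} projects onto the terminal conditions \eqref{psiT} and \eqref{xiT}. These are the computations displayed just before the statement; the only additional point needed is uniqueness of the backward solution of \eqref{psiT}--\eqref{xi}, which follows from its well-posedness, established as in \cite[Section~4.2]{chertovskihOptimalControlNonlocal2023}. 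Combining the three steps proves the theorem.

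The main obstacle is the third step: ensuring that the restriction to the two privileged classes of test functions loses no information, i.e.\ that $(\psi,\xi)$ — together with the already-known $\mu$ — is a complete and unambiguous description of the adjoint state, and that the cited representation of $\nu$ and the well-posedness of the projected system genuinely carry over to the present non-autonomous, fully nonlocal setting with the source/sink term. Everything else is routine bookkeeping already carried out above.
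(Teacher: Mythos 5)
Your proposal is correct and follows essentially the same route as the paper: the paper likewise obtains Version~II by expressing the lifted Hamiltonian \eqref{pf} through $\psi_i=([x\ y]\mapsto x)_\sharp\nu_i$ and $\xi=\beta(\nu_{n+1})$, deriving \eqref{psj} and \eqref{xi} from \eqref{eq:adjoint1} via the restricted test functions $\bm\phi(x,y)=\phi(x)$ and $\bm\phi(x,y)=y\,\phi(x)$, and invoking the well-posedness of the adjoint system as in the cited reference. The only difference is presentational: the paper carries out these computations in the text preceding the theorem rather than in a formal proof, while you additionally flag (correctly) that uniqueness of the backward solution is what makes the projected description unambiguous.
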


Although the latter assertion is merely a reformulation of Theorem~\ref{pmp}, it is much more convenient for the practical analysis of concrete examples. Moreover, this reformulation is vital for the development of PMP-based indirect numerical algorithms for optimal control. One of the key features here is that, in contrast to the measure $\gamma_t$, which is supported at the moment $T$ on the graph of a function and is therefore always singular w.r.t. $\mathfrak{L}^{2(n+1)}$ (even if $\rho_T$ is absolutely continuous w.r.t. $\mathfrak{L}^{(n+1)}$), the measures $(\psi_i)_t$ and $\xi_t$ inherit the absolute continuity of $\mu_t$ w.r.t. $\mathfrak{L}^n$, enabling the use of standard algorithms for numerical integration of hyperbolic PDEs. We refer to \cite{chertovskihOptimalControlNonlocal2023} for further discussions of this topic.

\begin{example}
   To illustrate the modus operandi of Theorem~\ref{vII}, consider the following example from~\cite{goncharovaisu}:
\[
\mathcal{I}[u] = \ell(\mu_1) \doteq \frac{1}{2} \int x^2 \, \d\mu_1(x) \to \min, \quad |u_t| \leq 1,
\]
subject to the balance equation:
\[
\partial_t \mu + u_t \, \partial_x \mu = -u_t \mu, \quad t \in [0,1], \quad \mu_0 = \vartheta \in \mathcal{M}^+_c(\R).
\]

Theorem~\ref{vII} provides the structure of the optimal controls as:  
\[
u_t \in \arg\max_{|u|\leq 1} \mathcal{H}(u, \psi_t, \xi_t) \doteq \arg\max_{|u|\leq 1} u \int \d(\psi_t - \xi_t) = \Sign(\psi_t - \xi_t)(\R),
\]
where the set-valued map $\Sign$ is defined as: 
\[
\Sign a = \frac{a}{|a|} \text{ for } a \neq 0, \quad \Sign 0 = [-1,1].
\]
The adjoint trajectories $\psi$ and $\xi$ satisfy the same transport PDEs:
\[
\partial_t \psi + u_t \, \partial_x \psi = 0, \quad \partial_t \xi + u_t \, \partial_x \xi = 0,
\]
with terminal conditions:
\[
[\psi_1 \ \xi_1] \doteq -\big[\nabla_\mu \ell(\mu_1) \ \frac{\delta \ell}{\delta \mu}(\mu_1)\big] = -[x \ x^2/2] \ \mu_1.
\]

Since the adjoint equations lack source terms, it follows that:
\[
[\psi_t(\R) \ \ \xi_t(\R)] \equiv [\psi_1(\R) \ \ \xi_1(\R)],
\]
and the maximum condition implies the structure of the optimal controls:
\[
u_t \in \Sign \int \big(x^2/2 - x\big) \, \d\mu_1(x).
\]

By using the characteristic representation \eqref{chrs}, we express:
\[
\mu_t = e^{-a(t)} (\tau_{a(t)})_\sharp \vartheta, \quad \tau_a(x) \doteq x + a, \quad a(t) = a[u](t) \doteq \int_0^t u_s \, \d s.
\]
Thus, an optimal control is determined as a solution to the operator inclusion:
\begin{equation}
\label{eq:max}
u_t\in 
 \Sign\int\big(x^2/2 - x\big)\d (\tau_{a[u](1)})_\sharp\vartheta = \Sign\left\{ \int \big(x + a[u](1) -1\big)^2\d\vartheta(x) -\vartheta(\mathbb R)\right\}.
\end{equation}

Remark that if $\vartheta = \delta_{x_0}$ with $x_0 \in \R$, then the original optimal control problem~$(\bm P)$ is equivalent to the problem
\begin{gather*}
\min \frac{1}{2} X_1^2 Y_1, \quad 
\dot{X}_t = u_t, \ \, X_0 = x_0, \quad 
\dot{Y}_t = -u_t Y_t, \ \, Y_0 = 1, \quad |u_t| \leq 1.
\end{gather*}
One can show that the optimality condition for the latter problem provided by the classical PMP coincides with~\eqref{eq:max}, where $\vartheta=\delta_{x_0}$.

Let us analyze the condition \eqref{eq:max}. First, notice that its right-hand side is independent of \( t \). Hence, $u_t$ can differ from $\pm 1$ only if 
\[
    \int \big(x + a[u](1) -1\big)^2\d\vartheta(x) =\vartheta(\mathbb R).
\]
Any $u$ satisfying this condition is a singular extremal.

Suppose that
\[
    \int \big(x + a[u](1) -1\big)^2\d\vartheta(x) > \vartheta(\mathbb R).
\]
Then, $u \equiv 1$, and $a(1) = 1$, leading to the condition:
\[
\int x^2 \, \d \vartheta  > \vartheta(\R).
\]
Finally, the case \( u \equiv -1 \) is provided by the condition
\[
\int (x-2)^2 \, \d \vartheta < \vartheta(\R). 
\]

When
\[
\int (x-2)^2 \, \d \vartheta < \vartheta(\R) < \int x^2 \, \d \vartheta,
\]
both $u \equiv \pm 1$ are extremal,
and their optimality is verified by comparing the corresponding cost values: 
\[
\mathcal{I}[u \equiv 1] = \frac{1}{2e} \int (x+1)^2 \, \d \vartheta, \quad 
\mathcal{I}[u \equiv -1] = \frac{e}{2} \int (x-1)^2 \, \d \vartheta,
\]
yielding the relation:
\[
u = \sign \int \left(e^2(x-1)^2 - (x+1)^2\right) \, \d \vartheta(x) 
= \sign \int \left\{x^2 + 2\frac{1+e^2}{1-e^2}x + 1\right\} \, \d \vartheta.
\]

\end{example}

\section{Reduction to an Evolution Equation in a Hilbert space}\label{tolston}

We have shown in Theorem~\ref{murhopro} that the balance equation~\eqref{eq:u-blaw} can be reduced to the continuity equation~\eqref{eq:contest}. On the other hand, it is known that the solution of~\eqref{eq:contest} can be represented as $\rho_t = \Phi_{t\sharp}(\vartheta \otimes \delta_1)$, where $\Phi\colon I\times \R^{n+1}\to \R^{n+1}$ is a unique solution to the Cauchy problem 
\[
\frac{d}{dt} \Phi_{t}(\bm x) = \bm F_t\left(u_t,\Phi_{t\sharp}(\vartheta\otimes \delta_1),\Phi_{t}(\bm x)\right), \quad \Phi_{0}(\bm x) = \bm x.
\]
The probability measure $\rho_t$ being an element of $\mathcal Y$ is also an element of the larger space $\mathcal{P}_2(\R^{n+1})$~--- the space of probability measures with finite second moments $\int|\bm x|^2\d\rho(\bm x)<+\infty$. Therefore, the quantity
\[
\int |\bm x|^2\d\rho_t(\bm x) = \int |\Phi_t(\bm x)|^2\d(\vartheta\otimes \delta_1)(\bm x)
\]
is bounded for any $t\in I$, which implies that $\Phi_t\in L^2_{\vartheta\otimes \delta_1}(\R^{n+1};\R^{n+1})$ for all $t\in I$.

Consider the Hilbert space $\mathbb H = L^2_{\vartheta\otimes \delta_1}(\R^{n+1};\R^{n+1})$ and define a function $\mathcal G\colon I\times U\times \mathbb H\to \mathbb H$ by 
\[
\mathcal G_t(\upsilon,\Psi) \doteq \bm F_t\left(\upsilon,\Psi_{\sharp}(\vartheta\otimes \delta_1),\Psi\right),\quad t\in I,\; \upsilon\in U,\; \Psi\in \mathbb H.
\]
Remark that $\mathcal G_t(\upsilon,\Psi)\in \mathbb H$ because $\bm F_t$ is locally bounded (thanks to~\ref{assumption-1}) and $\vartheta$ is compactly supported. 

It is obvious that $\mathcal G$ is measurable in $t$ and continuous in $\upsilon$. Let us study its regularity with respect to $\Psi$.

\begin{lemma}\label{lem:Hlip}
    Let $\rho \mapsto \bm F_t(\upsilon, \rho, \bm x)$ be $M$-Lipschitz for all $t\in I$, $\upsilon \in U$ and $\bm x\in \R^{n+1}$ with some $M>0$.
    Then the map $\Psi \mapsto \mathcal{G}_t(\upsilon,\Psi)$ is  $2M$-Lipschitz for all $t\in I$ and $\upsilon\in U$.
\end{lemma}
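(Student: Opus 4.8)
The plan is to push the mean-field Lipschitz estimate for $\bm F$ through the superposition defining $\mathcal G$; the only non-routine ingredient is bounding a Wasserstein distance between pushforwards by the corresponding $L^2$-distance. Write $\lambda\doteq\vartheta\otimes\delta_1$, so that $\mathbb H=L^2_\lambda(\R^{n+1};\R^{n+1})$ and $\|\Xi\|_{\mathbb H}^2=\int|\Xi(\bm x)|^2\d\lambda(\bm x)$; fix $t\in I$, $\upsilon\in U$, and $\Psi,\Psi'\in\mathbb H$. Since $\mathcal G_t(\upsilon,\Psi)(\bm x)=\bm F_t\big(\upsilon,\Psi_\sharp\lambda,\Psi(\bm x)\big)$, I would first add and subtract $\bm F_t\big(\upsilon,\Psi_\sharp\lambda,\Psi'(\bm x)\big)$ and use that $\bm F_t(\upsilon,\cdot,\cdot)$ is $M$-Lipschitz both in the measure argument (the hypothesis of the lemma) and, with the same constant, in the base point. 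This gives, for $\lambda$-a.e.\ $\bm x$, the pointwise bound
\[
\big|\mathcal G_t(\upsilon,\Psi)(\bm x)-\mathcal G_t(\upsilon,\Psi')(\bm x)\big|\le M\Big(|\Psi(\bm x)-\Psi'(\bm x)|+W_2\big(\Psi_\sharp\lambda,\Psi'_\sharp\lambda\big)\Big).
\]

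The key step is to estimate the constant $W_2\big(\Psi_\sharp\lambda,\Psi'_\sharp\lambda\big)$ by $\|\Psi-\Psi'\|_{\mathbb H}$. To this end I would observe that the map $\bm x\mapsto\big(\Psi(\bm x),\Psi'(\bm x)\big)$ pushes $\lambda$ forward to a transport plan between $\Psi_\sharp\lambda$ and $\Psi'_\sharp\lambda$ (its two marginals are exactly these measures), whence
\[
W_2^2\big(\Psi_\sharp\lambda,\Psi'_\sharp\lambda\big)\le\int|\Psi(\bm x)-\Psi'(\bm x)|^2\d\lambda(\bm x)=\|\Psi-\Psi'\|_{\mathbb H}^2.
\]
This ``diagonal coupling'' bound is the heart of the argument: it is the mechanism transferring the regularity of $\bm F$ on the space of measures into the regularity of $\mathcal G$ on the Hilbert space, and it is the only place where the specific form $\Psi\mapsto\Psi_\sharp\lambda$ of the measure-dependence is used.

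It then remains to combine the two displays. Squaring the pointwise estimate, integrating against the probability measure $\lambda$, bounding the cross term by Cauchy--Schwarz ($\int|\Psi-\Psi'|\d\lambda\le\|\Psi-\Psi'\|_{\mathbb H}$, since $\lambda$ is a probability measure) and using the $W_2$-bound above, each of the three resulting terms is dominated by $M^2\|\Psi-\Psi'\|_{\mathbb H}^2$; hence $\|\mathcal G_t(\upsilon,\Psi)-\mathcal G_t(\upsilon,\Psi')\|_{\mathbb H}^2\le 4M^2\|\Psi-\Psi'\|_{\mathbb H}^2$, which is the claimed $2M$-Lipschitz property. The argument has no real obstacle once the diagonal coupling is in hand; the only point requiring a little care is this final bookkeeping, since recovering exactly the constant $2M$ — rather than some larger multiple of $M$ — requires the factor $2$ to come from the two contributions (the ``measure'' and the ``base-point'' parts of the perturbation), each of which is controlled by exactly $M\|\Psi-\Psi'\|_{\mathbb H}$.
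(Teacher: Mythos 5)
Your proposal is correct and follows essentially the same route as the paper: the same pointwise splitting of $\bm F_t$ into a measure-perturbation and a base-point perturbation, the same reduction of $W_2(\Psi_\sharp\lambda,\Psi'_\sharp\lambda)$ to $\|\Psi-\Psi'\|_{\mathbb H}$, and the same Cauchy--Schwarz bookkeeping yielding the constant $4M^2$ after squaring. The only (welcome) difference is that you make the diagonal-coupling justification of the $W_2$ bound explicit, whereas the paper uses it without comment.
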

\begin{proof}
First, note that
    \begin{align*}
        \|\mathcal{G}_t(\upsilon,\Psi^1) - \mathcal{G}_t(\upsilon,\Psi^2)\|^2_{\mathbb H} \!=\! \int \left|\bm F_t(\upsilon, \Psi^1_\sharp(\vartheta\otimes \delta_1),\Psi^1(\bm x))-
        \bm F_t(\upsilon, \Psi^2_\sharp(\vartheta\otimes \delta_1),\Psi^2(\bm x))
        \right|^2\!\d (\vartheta\otimes \delta_1)(\bm x)
    \end{align*}
Since
\begin{align*}
    \left|\bm F_t(\upsilon, \Psi^1_\sharp(\vartheta\otimes \delta_1),\Psi^1(\bm x))\!-\!
        \bm F_t(\upsilon, ,\Psi^2(\bm x))
        \right|&\le M\left(W_2\left(\Psi^1_\sharp(\vartheta\otimes \delta_1),\Psi^2_\sharp(\vartheta\otimes \delta_1)\right) + \left|\Psi^1(\bm x)\!-\!\Psi^2(\bm x)\right|\right)\\
        &\le M\left(\left\|\Psi^1-\Psi^2\right\|_{\mathbb H} + \left|\Psi^1(\bm x)-\Psi^2(\bm x)\right|\right),
\end{align*}
we conclude that 
\begin{align*}
        \|\mathcal{G}_t(\upsilon,\Psi^1) - \mathcal{G}_t(\upsilon,\Psi^2)\|^2_{\mathbb H} \le M^2 \left(
        2\left\|\Psi^1\!-\!\Psi^2\right\|_{\mathbb H}^2 + 2\left\|\Psi^1\!-\!\Psi^2\right\|_{\mathbb H}\int \left|\Psi^1(\bm x)\!-\!\Psi^2(\bm x)\right|\!\d(\vartheta\otimes\delta_1)(\bm x)
        \right).    
\end{align*}
    Using H\"older's inequality gives 
    \[
    \|\mathcal{G}_t(\upsilon,\Psi^1) - \mathcal{G}_t(\upsilon,\Psi^2)\|_{\mathbb H}^2\le 4M^2 \left\|\Psi^1-\Psi^2\right\|_{\mathbb H}^2,
    \]
    as desired.
\end{proof}

There is still a problem: we cannot expect that $\bm F$ is globally Lipschitz in $\rho$ in our setting. In fact, Assumption~\ref{assumption-1} and Lemma~\ref{lem:betalip} imply that it is merely \emph{locally} Lipschitz. However, one may easily show that all trajectories of the differential inclusion 
\begin{equation}\label{eq:incl}
\dot \Psi_t \in \mathcal{G}_t(U,\Psi_t),\quad \Psi_0 = \id,    
\end{equation}
satisfy the following property: there exists a compact set $E\subset \R^{n+1}$ such that
\[
\spt\left((\Psi_{t})_\sharp (\vartheta\otimes \delta_1)\right)\subset E \quad \forall t\in I.
\]
This means, thanks to~\ref{assumption-1} and Lemma~\ref{lem:Hlip}, that $\Psi\mapsto\mathcal G_t(\upsilon,\Psi)$ is globally Lipschitz when restricted to the solution set of~\eqref{eq:incl}.

The final observation enables the direct translation of certain classical results for Lipschitz ODEs in Hilbert spaces~\cite{tolstonogovDifferentialInclusionsBanach2011}, such as Filippov-Wazewski's Theorem and the Bang-Bang principle, to balance equations of the form~\eqref{eq:u-blaw}. However, we do not explore this direction further in the present paper. 

\section{Conclusion}

The paper explores an optimal control problem for a nonlocal balance equation with a specific ``semi-linear" source term. Our key idea consists is embedding this problem into the well-studied class of control problems on the space of probabilities, which, as a main byproduct, grants an adequate version of Pontryagin's maximum principle.

All presented results are easily generalized to the case when the space $\mathcal{U}$ of ordinary controls is replaced by generalized (sliding mode) controls of the Young-Warga-Gamkrelidze type \cite{young1969,warga1972optimal,Gamkrelidze1978}. Moreover, we believe that the proposed approach is applicable for translating mean-field versions of the PMP \cite{bonnetPontryaginMaximumPrinciple2019,averboukhPontryaginMaximumPrinciple2022} within the class of controls depending on the state variable $x$.

A natural direction for future work is the development of optimization algorithms for the numerical solution of the stated problem, similar to \cite{chertovskihOptimalControlNonlocal2023}, as well as the derivation of feedback optimality principles in the spirit of \cite{goncharovaisu}.

Finally, in Section~\ref{tolston}, we propose a further reduction of the control system~\eqref{eq:u-blaw} to a differential inclusion in a Hilbert space. This reduction potentially provides a pathway for studying the topological structure of the solution set of~\eqref{eq:u-blaw} by leveraging the well-established properties of the corresponding differential inclusion.

%
%


\bibliography{references.bib}

\end{document}